\date{}
\newcommand{\boxa}[1]{\fbox{$\displaystyle#1$}}
\newcommand{\dath}{D(A)\times H}
\newcommand{\hg}{\widehat{g}}
\newcommand{\gaph}{\mathcal{G}(\lambda_{1},\lambda_{2},H)}
\newcommand{\ep}{\varepsilon}
\renewcommand{\qed}{{\penalty 10000\mbox{$\quad\Box$}}}
\newcommand{\re}{\mathbb{R}}
\newtheorem{thm}{Theorem}[section]
\newtheorem{rmk}[thm]{Remark}
\newtheorem{prop}[thm]{Proposition}
\newtheorem{defn}[thm]{Definition}
\newtheorem{cor}[thm]{Corollary}
\newtheorem{lemma}[thm]{Lemma}
\title{An infinite dimensional Duffing-like evolution equation with linear dissipation and an asymptotically small source term}
\author{Marina Ghisi\vspace{1ex}\\ 
{\normalsize Universit\`a degli Studi di Pisa} \\
{\normalsize Dipartimento di Matematica}\\ 
{\normalsize PISA (Italy)}\\
{\normalsize e-mail: \texttt{marina.ghisi@unipi.it}}
\and
Massimo Gobbino\vspace{1ex}\\ 
{\normalsize Universit\`a degli Studi di Pisa} \\
{\normalsize Dipartimento di Ingegneria Civile e Industriale}\\ 
{\normalsize PISA (Italy)}\\  
{\normalsize e-mail: \texttt{massimo.gobbino@unipi.it}}
\and
Alain Haraux\vspace{1ex}\\ 
{\normalsize Universit\'{e} Pierre et Marie Curie} \\
{\normalsize Laboratoire Jacques-Louis Lions}\\ 
{\normalsize PARIS (France)}\\  
{\normalsize e-mail: \texttt{haraux@ann.jussieu.fr}}}
\begin{document}
\maketitle
\begin{abstract}

We consider an abstract nonlinear second order evolution equation, inspired by some models for damped oscillations of a beam subject to external loads or magnetic fields, and shaken by a transversal force. When there is no external force, the system has three stationary positions, two stable and one unstable, and all solutions are asymptotic for $t$ large to one of these stationary solutions.	

We show that this pattern extends to the case where the external force is bounded and small enough, in the sense that solutions can exhibit only three different asymptotic behaviors.
\vspace{6ex}

\noindent{\bf Mathematics Subject Classification 2010 (MSC2010):} 
35B40, 35L75, 35L90.

% 35B40   	Asymptotic behavior of solutions
% 35B36   	Pattern formation
% 35L70   	Nonlinear second-order hyperbolic equations
% 35Q74   	PDEs in connection with mechanics of deformable solids
% 35L75   	Nonlinear higher-order hyperbolic equations
% 35L35   	Initial-boundary value problems for higher-order hyperbolic equations
% 35L90   	Abstract hyperbolic equations
		
\vspace{6ex}

\noindent{\bf Key words:} Duffing equation, asymptotic behavior, dissipative hyperbolic equation, magneto-elastic oscillations.

\end{abstract}

%%%%%%%%%%%%%%%%%%%%%
%                   %
%   Inizio lavoro   %
%                   %
%%%%%%%%%%%%%%%%%%%%%
 
\section{Introduction}

Let $H$ be a real Hilbert space, in which $|x|$ denotes the norm of an
element $x\in H$, and $\langle x,y\rangle$ denotes the scalar product
of two elements $x$ and $y$.  Let $A$ be a self-adjoint positive operator on
$H$ with dense domain $D(A)$.  

We consider some evolution problems of the following form
\begin{equation}
u''+\delta u'+k_{1}A^{2}u-k_{2}Au+k_{3}|A^{1/2}u|^{2}Au=f(t),
\label{eqn:duffing-cpx}
\end{equation}
where $\delta$, $k_{1}$, $k_{2}$, $k_{3}$ are positive constants, and $f:[0,+\infty)\to H$ is a given forcing term, with initial data
\begin{equation}
u(0)=u_{0},
\hspace{4em}
u'(0)=u_{1}.
\nonumber
\end{equation}

A  concrete example of an equation that fits in  this abstract framework is the partial differential equation
\begin{equation}
u_{tt}+\delta u_{t}+k_{1}u_{xxxx}+k_{2}u_{xx}-k_{3}\left(\int_{0}^{1}u_{x}^{2}\,dx\right)u_{xx}=f(t, x)
\label{eqn:duffing-concr}
\end{equation}
in the strip $(t,x)\in [0,+\infty)\times [0,1]$, with boundary conditions
\begin{equation}
u(t,x)=u_{xx}(t,x)=0
\qquad
\forall(t,x)\in [0,+\infty)\times \{0,1\}.
\label{eqn:bc-concr}
\end{equation}

\paragraph{\textmd{\textit{Physical models and experiments}}}

Equation (\ref{eqn:duffing-concr}) appears in~\cite{holm-mars} as a model for the motion of a beam which is buckled by an external load $k_{2}$, and shaken by a transverse displacement $f(t)$ (depending only on time, in that model). The boundary conditions (\ref{eqn:bc-concr}) correspond to ``hinged ends'', but many different choices are also possible. Equation (\ref{eqn:duffing-concr}) becomes a special case of (\ref{eqn:duffing-cpx}) if we choose $H:=L^{2}((0,1))$ and $Au=-u_{xx}$ with homogeneous Dirichlet boundary conditions.

A different physical model leading to equations of the form (\ref{eqn:duffing-concr}), althought with different boundary conditions, is the so called magneto-elastic cantilever beam described in~Figure~1 of~\cite{moon-holm}. The physical apparatus consists in a beam which is clamped vertically at the upper end, and suspended at the other end between two magnets secured to a base. The whole system is shaken by an external force transversal to the beam. 

Both systems exhibit a somewhat complex behavior. To begin with, let us consider the case without external force. When $k_{2}$ is small enough, the trivial solution $u(t)\equiv 0$ is stable. This regime corresponds to a small external load in the first model, and to a large distance from the magnets in the case of the magneto-elastic beam. When $k_{2}$ increases, the trivial solution becomes unstable, and two nontrivial equilibrium states appear. In this new regime, the effect of an external force seems to depend deeply on the size of the force itself. If the force is small enough, experiments reveal that solutions remain close to the equilibrium states of the unforced system. On the contrary, when the external force is large enough, trajectories seem to show a chaotic behavior. Describing and modelling this chaotic behavior was actually the main goal of~\cite{moon-holm,holm-mars}.

\paragraph{\textmd{\textit{Simple modes and Duffing's equation}}}

Up to changing the unknown and the operator according to the rules
$$u(t)\rightsquigarrow\alpha u(\beta t),
\qquad\qquad
A\rightsquigarrow\gamma A$$
for suitable values of $\alpha$, $\beta$, $\gamma$, we can assume that three of the four constants in (\ref{eqn:duffing-cpx}) are equal to~1. We end up, naming for simplicity the new unknown by $u$ as well,  with the equation 
\begin{equation}
u''+u'+A^{2}u-\lambda Au+|A^{1/2}u|^{2}Au=f(t).
\label{eqn:duffing}
\end{equation} with the initial conditions renamed accordingly \begin{equation}
u(0)=u_{0},
\hspace{4em}
u'(0)=u_{1}.
\label{eqn:data}
\end{equation}

Just to fix ideas, we can also assume, as in the concrete example (\ref{eqn:duffing-concr}), that $H$ admits an orthonormal basis $\{e_{n}\}$ made by eigenvectors of $A$, corresponding to an increasing sequence $\lambda_{1}<\lambda_{2}<\ldots$ of positive eigenvalues. If we restrict equation (\ref{eqn:duffing}) to the $k$-th eigenspace, we obtain an ordinary differential equation of the form
\begin{equation}
u_{k}''+u_{k}'+\lambda_{k}(\lambda_{k}-\lambda)u_{k}+\lambda_{k}^{2}u_{k}^{3}=f_{k}(t).
\label{eqn:duffing-k}
\end{equation}

Of course (\ref{eqn:duffing}) is not equivalent to the system made by (\ref{eqn:duffing-k}) as $k$ varies, because of the coupling due to the nonlinear term. Nevertheless, in the special case where both initial data and the external force are multiples of a given eigenvector $e_{k}$, equation (\ref{eqn:duffing}) reduces exactly to (\ref{eqn:duffing-k}).

Equation (\ref{eqn:duffing-k}) is known in the mathematical literature as \emph{Duffing's equation}. When there is no external force, namely $f_{k}(t)\equiv 0$, it is well-known that the behavior of solutions depend on the sign of the coefficient of $u_{k}$, or equivalently of $ \lambda_{k}-\lambda $. 
\begin{itemize}
  \item When $ \lambda < \lambda_{k}$, we are in the so-called \emph{hardening regime}, in which the trivial solution $u_{k}(t)\equiv 0$ is the unique stationary solution, and it is stable. 
  \item When $ \lambda > \lambda_{k}$, we are in the so-called \emph{softening regime}, in which the trivial solution is unstable, and (\ref{eqn:duffing-k}) has exactly two nontrivial stable equilibrium solutions
$$u(t)\equiv\pm\sqrt{\frac{\lambda-\lambda_{k}}{\lambda_{k}}}.$$
\end{itemize}

In  particular, when $\lambda<\lambda_{1}$, all projections end up in the hardening regime, and this is the case where experiments revealed stability of the trivial solution. When $\lambda\in(\lambda_{1},\lambda_{2})$, the first projection is in the softening regime, while all other projections are in the hardening regime. This is the range considered in~\cite{moon-holm,holm-mars}, and it is also the range we consider in this paper. In this range, a natural toy model for (\ref{eqn:duffing}) is the ordinary differential equation (\ref{eqn:duffing-k}) with $k=1$. This equation, with a simple variable change, can be put in the more standard form
\begin{equation}
u''+\delta u'-u+u^{3}=f(t).
\label{eqn:duffing-h}
\end{equation}

\paragraph{\textmd{\textit{Stability vs chaos}}}

In the hardening regime, namely when in (\ref{eqn:duffing-h}) the term $-u+u^{3}$ is replaced by $u+u^{3}$, it has been known for quite a while that, if $f(t)$ is globally defined and small enough in uniform norm, there exists a unique solution that is bounded on the whole line, and this solution is periodic if $f(t)$ is periodic (see~\cite {loud2}). 

On the contrary, the behavior of solutions can be quite complicated when $f(t)$ is large enough. For example, if $f(t)$ is periodic with minimal period $T$, there may exist solutions with minimal period equal to a multiple of $T$, known in the literature as ``subharmonic'' solutions (see for example the papers~\cite{F-H} and~\cite{G-H, N-S} concerning the number of subharmonic solutions for large forcing terms). The existence of subharmonic solutions with arbitrarily large periods may lead to a chaotic dynamic, with, usually, a classical transition scenario for large multiples of a given forcing term. 

This phenomenon has motivated many specialized papers in the middle of the twentieth century, where nonlinear terms more general than the cube have been considered (see for example~\cite{loud1, loud3}).

In the softening regime, in the cubic case the issue is to identify three privileged solutions playing the role of the equilibria when the forcing term is small enough in uniform norm. Partial steps in this direction were achieved in~\cite{haraux:duffing}, where the  two following results are proven assuming that $f$ is defined and bounded on the whole line.
\begin{itemize}
  \item If $f$ is small enough in $L^\infty (\re)$, equation (\ref{eqn:duffing-h}) has three special solutions, which are the generalization of the two stable solutions $u(t)\equiv\pm 1$ and of the unstable solution $u(t)\equiv 0$ of the unforced equation. When $f $ is periodic with (minimal) period $T>0$, the special solutions are $T$-periodic as well.
  \item If $f$ is small enough in $L^\infty (\re)$, and in addition $\delta\geq 2\sqrt{2}$, then all solutions to  (\ref{eqn:duffing-h}) are asymptotic to one of the special ones as $t\to +\infty$. In the $T$-periodic case, this asymptotic convergence result is enough to exclude the presence of subharmonic solutions or more chaotic behavior.
\end{itemize}

Unfortunately, the techniques of~\cite{haraux:duffing}, which have been followed in a more general context by \cite{F-G}, seem to require in an essential way that $\delta\geq 2\sqrt{2}$, which coming back to the toy model of (\ref{eqn:duffing-k}) with $k=1$ corresponds to asking that $\lambda_{1}(\lambda-\lambda_{1})$ is small enough.

\paragraph{\textmd{\textit{Our results}}} 

In this paper we consider the full equation (\ref{eqn:duffing}) in the infinite dimensional setting, again in the range $\lambda\in(\lambda_{1},\lambda_{2})$. Under smallness assumptions on the forcing term, but without any further restriction on $\lambda$, $\lambda_{1}$ and $\lambda_{2}$, we prove that all solutions remain close, as $t\to +\infty$, to one of the three stationary solutions to the unforced equation, within a distance depending on the size of the forcing term. Moreover, two solutions that are eventually close to the same stationary point are actually asymptotic to each other. Finally, out of the three possible asymptotic profiles, two are stable in the sense that the set of initial data that originate solutions converging to them is a nonempty open set; the other one corresponds to the physically irrelevant ``solution in between''.

These results are new even in the simple setting of (\ref{eqn:duffing-h}), because they imply that the asymptotic convergence result of~\cite{haraux:duffing} holds true without the technical assumption that $\delta\geq 2\sqrt{2}$.

\paragraph{\textmd{\textit{Structure of the paper}}} 

This paper is organized as follows. In section~\ref{sec:statements} we clarify the functional setting, we state a preliminary well-posedness result for (\ref{eqn:duffing}) (Theorem~\ref{thmbibl:wp}), and then we state our main result (Theorem~\ref{thm:main}) concerning the existence of three different asymptotic regimes, and some consequences (Corollary~\ref{3sol}). In section~\ref{sec:proof-sketch} we reduce the proof of our main result to the proof of four auxiliary propositions, where we concentrate the technical machinery of the paper, and we prove Corollary~\ref{3sol}. Finally, section~\ref{sec:proof-prop} is devoted to the proof of the propositions.

\setcounter{equation}{0}
\section{Statements}\label{sec:statements}

\subsection{Preliminary results}\label{Prelim}

We start by some basic properties appealing to rather classical techniques. 

\begin{thm}[Well-posedness]\label{thmbibl:wp}

Let $H$ be a Hilbert space, let $A$ be a self-adjoint nonnegative linear operator on $H$ with dense domain $D(A)$, let $\lambda$ be a real number, and let $f\in C^{0}([0,+\infty),H)$.

Then the following statements hold true.
\begin{enumerate}
   \renewcommand{\labelenumi}{(\arabic{enumi})}
  
	 \item \emph{(Global existence and uniqueness)} For every $(u_{0},u_{1})\in\dath$, problem (\ref{eqn:duffing})--(\ref{eqn:data}) admits a unique global solution
	  \begin{equation}
	  u\in C^{0}\left([0,+\infty),D(A)\right)\cap C^{1}\left([0,+\infty),H\right).
	  \nonumber
	  \end{equation}
  
  \item \emph{(Continuous dependence on initial data)} Let $\{(u_{0n},u_{1n})\}$ be any sequence with
  \begin{equation}
  (u_{0n},u_{1n})\to(u_{0},u_{1})
  \quad\quad
  \mbox{in }\dath,
  \nonumber
  \end{equation}
  and let $u_{n}(t)$ denote the solution to (\ref{eqn:duffing}) with initial data $u_{n}(0)=u_{0n}$ and $u'(0)=u_{1n}$.
  
  Then for every $T>0$ it turns out that
  \begin{equation}
  u_{n}(t)\to u(t)
  \quad\quad
  \mbox{uniformly in }C^{0}\left([0,T],D(A)\right),
  \nonumber
  \end{equation}
  \begin{equation}
  u_{n}'(t)\to u'(t)
  \quad\quad
  \mbox{uniformly in }C^{1}\left([0,T],H\right).
  \nonumber
  \end{equation}
   
  \item \emph{(Derivative of the energy)} The classical energy
  \begin{equation}
  E(t):=\frac{1}{2}|u'(t)|^{2}+\frac{1}{2}|Au(t)|^{2}-\frac{\lambda}{2}|A^{1/2}u(t)|^{2}+\frac{1}{4}|A^{1/2}u(t)|^{4}
  \label{defn:E}
  \end{equation}
  is of class $C^{1}$, and its time-derivative is given by
  \begin{equation}
  E'(t)=-|u'(t)|^{2}+\langle u'(t),f(t)\rangle
  \quad\quad
  \forall t\geq 0.
  \label{defn:E-classical}
  \end{equation} 
\end{enumerate}

\end{thm}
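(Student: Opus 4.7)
The plan is to recast (\ref{eqn:duffing})--(\ref{eqn:data}) as a first-order semilinear evolution problem on the Banach space $X:=\dath$, with state $U(t)=(u(t),u'(t))$. The linear part
\begin{equation}
\mathcal{A}(u,v):=(v,\,-A^{2}u-v),\qquad D(\mathcal{A}):=D(A^{2})\times D(A),
\nonumber
\end{equation}
generates a $C^{0}$-semigroup $S(t)$ on $X$ (standard, via Hille--Yosida, or by diagonalizing $A$ through its spectral decomposition), and the nonlinear map
\begin{equation}
F(u,v,t):=(0,\,\lambda Au-|A^{1/2}u|^{2}Au+f(t))
\nonumber
\end{equation}
is locally Lipschitz from $X$ into itself: on bounded subsets of $D(A)$ the scalar $|A^{1/2}u|^{2}$ is bounded and Lipschitz, while $Au$ is linear into $H$. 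A Banach contraction argument on the mild formulation $U(t)=S(t)U_{0}+\int_{0}^{t}S(t-s)F(U(s),s)\,ds$ then yields a unique local solution with the regularity announced in~(1).

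For (3) and for global existence I would argue first at the classical regularity $(u_{0},u_{1})\in D(A^{2})\times D(A)$, where the solution lies in $C^{0}(D(A^{2}))\cap C^{1}(D(A))\cap C^{2}(H)$ and the pointwise chain rule
\begin{equation}
E'(t)=\langle u'',u'\rangle+\langle A^{2}u,u'\rangle-\lambda\langle Au,u'\rangle+|A^{1/2}u|^{2}\langle Au,u'\rangle=\langle f-u',u'\rangle
\nonumber
\end{equation}
is rigorous; the middle steps use the self-adjointness of $A$, and the final equality substitutes the equation (\ref{eqn:duffing}). Setting $y:=|A^{1/2}u|^{2}$, the elementary bound $\tfrac{y^{2}}{4}-\tfrac{\lambda y}{2}\geq-\tfrac{\lambda^{2}}{4}$ yields
\begin{equation}
E(t)\geq\tfrac{1}{2}|u'(t)|^{2}+\tfrac{1}{2}|Au(t)|^{2}-\tfrac{\lambda^{2}}{4},
\nonumber
\end{equation}
and coupling this with $E'(t)\leq\tfrac{1}{2}|f(t)|^{2}$ (Young's inequality applied to (\ref{defn:E-classical})) gives an a priori bound on $\|(u(t),u'(t))\|_{X}$ on every bounded time interval, ruling out finite-time blow-up. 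General data $(u_{0},u_{1})\in X$ are then handled by density approximation with classical data, the uniform energy bound providing the compactness needed to pass to the limit in both the equation and the identity (\ref{defn:E-classical}).

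For continuous dependence (statement (2)), the difference $w_{n}:=u_{n}-u$ solves a linear dissipative equation whose forcing in $H$ is controlled by $|Aw_{n}(t)|$ times a constant depending on a uniform $D(A)$ bound on $u_{n}$ and $u$ over $[0,T]$ (which follows from (1) applied to the converging sequence). An analogous energy estimate for $w_{n}$ combined with Gr\"onwall's lemma yields the desired convergence in $C^{0}([0,T],D(A))\cap C^{1}([0,T],H)$. The main obstacle of the whole program is the bookkeeping involved in the approximation step used to transfer (3) from classical to mild solutions, since at the natural regularity $C^{0}(D(A))\cap C^{1}(H)$ the term $A^{2}u$ does not a priori live in $H$; this is nevertheless a standard density argument, made tractable by the fact that the nonlinearity $|A^{1/2}u|^{2}Au$ is polynomial and strictly of lower order than the principal part $A^{2}u$.
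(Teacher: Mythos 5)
Your overall strategy matches the paper's: rewrite (\ref{eqn:duffing}) as a first-order semilinear problem on $D(A)\times H$, get local existence and uniqueness from a fixed-point argument with a locally Lipschitz nonlinearity, derive global existence and continuous dependence from energy bounds plus Gr\"onwall, and the a priori bound $E(t)\geq \tfrac12|u'|^2+\tfrac12|Au|^2-\tfrac{\lambda^2}{4}$ together with $E'\leq\tfrac12|f|^2$ is exactly the right estimate. The cosmetic difference that you include the damping $-v$ in the linear generator $\mathcal{A}$, while the paper instead keeps $L$ skew-adjoint and absorbs $-v$ into the nonlinear term $G$, is immaterial; both decompositions produce a well-defined $C^0$-semigroup and a locally Lipschitz perturbation.

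The only genuinely different step, and the one with a gap, is your treatment of statement~(3). The paper proves the energy identity directly for mild solutions by invoking \cite[Lemma 11]{H-LN} together with the elementary observation that $t\mapsto|A^{1/2}u(t)|^2$ is $C^1$ with derivative $2\langle Au,u'\rangle$. You instead propose to prove the identity at classical regularity and transfer it by density. The problem is that density in the \emph{initial data} alone is not enough: with $(u_0,u_1)\in D(A^2)\times D(A)$ but $f$ merely in $C^0([0,+\infty),H)$, the standard regularity theorems (e.g.\ Cazenave--Haraux, Pazy) do \emph{not} give a classical solution; they require either $f\in C^1$ in time or $f$ with values in $D(A)$ (so that the inhomogeneity stays in $D(\mathcal{A})$). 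As written, your claim that the approximating solution lies in $C^0(D(A^2))\cap C^1(D(A))\cap C^2(H)$ does not follow. The fix is standard but must be stated: approximate $f$ as well by, say, $f_n\in C^1([0,T],H)$ (or by $D(A)$-valued functions) with $f_n\to f$ in $C^0([0,T],H)$; then the solutions $u_n$ for the smoothed data and forcing are classical, satisfy the energy identity pointwise, and the a priori bound (uniform in $n$ because it depends on $f_n$ only through $\sup|f_n|$) lets you pass to the limit in the integrated identity $E_n(t)-E_n(0)=\int_0^t(-|u_n'|^2+\langle u_n',f_n\rangle)\,ds$ and then read off $C^1$-regularity of $E$ from the continuity of the limiting integrand. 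Once that is added, your density route is a correct, self-contained alternative to the citation the paper relies on.
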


The proof of this result is quite standard. Introducing the vector $U(t): = (u(t), v(t))$, equation (\ref{eqn:duffing}) can be written on the product Hilbert space $\mathcal{H}:=D(A)\times H$ in the form 
$$ U' = LU + G(U) + (0, f(t)), $$  
where 
$$ LU := (v, -A^2u), 
\qquad 
\forall U \in D(L) := D(A^2)\times D(A), $$
and 
$$ G( U) := (0, -v+ \lambda Au  -  |A^{1/2}u|^2 Au).$$
 
The linear operator $L$ is skew-adjoint on $\cal{H}$, and the nonlinear term $G:D(A)\to H$ is Lipschitz continuous on bounded subsets. This is enough to deduce local existence through standard techniques (see for example~\cite{C-H}), as well as continuous dependence on initial data as soon as all solutions are global. In turn, global existence follows from the bounds on the classical energy (\ref{defn:E}) that can be deduced from (\ref{defn:E-classical}) through Gronwall's Lemma. Finally, (\ref{defn:E-classical}) is an immediate application of \cite[Lemma 11]{H-LN} combined with the observation that, since $ u\in C^{0}\left([0,+\infty),D(A)\right)\cap C^{1}\left([0,+\infty),H\right)$, we have 
$$F_1(t):= |A^{1/2}u(t)|^2\in C^{1}\left([0,+\infty),H\right) $$
with 
$$F'_1(t) = 2\langle Au(t),u'(t)\rangle.$$  

We leave the details to the reader.

\begin{rmk}\label{rmk:backward}
\begin{em}

The well-posedness result holds true also backward-in-time. In particular
\begin{itemize}
  \item if $f(t)$ is defined and continuous in the whole real line, then $u(t)$ is defined on the whole real line,
  \item if $f(t)$ is defined only for $t\geq 0$, but ``initial'' data are
  $$u(T_{0})=u_{0}\in D(A),
  \qquad\quad
  u'(T_{0})=u_{1}\in H$$
  for some $T_{0}\geq 0$, then the solution is again defined for every $t\geq 0$.
\end{itemize} 

\end{em}
\end{rmk}

In the sequel we restrict our analysis to the case where $\lambda$ lies between the first two eigenvalues of $A$. To be more precise, we introduce the following class of operators.

\begin{defn}[Operators with gap condition]\label{defn:gap}
\begin{em}

Let $H$ be a Hilbert space, and let $\lambda_{1}<\lambda_{2}$ be two positive real numbers. We say that an operator $A$ satisfies the $(\lambda_{1},\lambda_{2})$ gap condition, and we write $A\in\gaph$, if $A$ is a self-adjoint linear operator on $H$ with dense domain $D(A)$, and there exists $e_{1}\in H$, with $|e_{1}|=1$, such that
\begin{itemize}
  \item $Ae_{1}=\lambda_{1}e_{1}$,
  \item $|Ax|^{2}\geq\lambda_{2}^{2}|x|^{2}$ for every $x\in D(A)$ with $\langle x,e_{1}\rangle=0$.
\end{itemize}

\end{em}
\end{defn}

\begin{rmk}
\begin{em}
When $A\in\gaph$, the parameter $\lambda_{1}$ turns out to be the smallest eigenvalue of $A$, with corresponding unit eigenvector $e_{1}$, and the spectrum of $A$ does not intersect the interval $(\lambda_{1},\lambda_{2})$. A classical example of operator in $\gaph$ is any operator whose spectrum is a sequence $\lambda_{1}<\lambda_{2}<\ldots$ of positive real numbers, and whose first eigenvalue $\lambda_{1}$ is simple. The Laplacian with homogeneous Dirichlet boundary conditions  in any reasonable bounded domain fits in this framework. 
\end{em}
\end{rmk}

\begin{rmk}[Stationary solutions to the unforced equation]
\begin{em}

When $A\in\gaph$, the unforced equation 
\begin{equation}
u''+u'+A^{2}u-\lambda Au+|A^{1/2}u|^{2}Au=0.
\label{eqn:f=0}
\end{equation}
admits exactly three stationary solutions, namely those of the form $u(t)\equiv\sigma e_{1}$ with $\sigma\in\{-\sigma_{0},0,\sigma_{0}\}$, where 
\begin{equation}
\sigma_{0}:=\sqrt{\frac{\lambda-\lambda_{1}}{\lambda_{1}}}.
\label{defn:sigma-0}
\end{equation}

Indeed, stationary solutions to (\ref{defn:sigma-0}) are the solutions to the abstract elliptic equation 
$$ A^2u = \lambda Au  -  |A^{1/2}u|^2 Au = \mu Au $$  
with $ \mu :=\lambda  -  |A^{1/2}u|^2 $.  This means that either $u= 0$, or $Au$ is an eigenvector of $A$ with eigenvalue $\mu$. Since $\mu<\lambda <\lambda_2$, the only possibility is that $ \mu = \lambda_1$, in which case we deduce that $u=\sigma e_{1}$ for some $\sigma\in\{-\sigma_{0},0,\sigma_{0}\}$.

\end{em}
\end{rmk}

\begin{rmk}[Convergence to equilibria for the unforced equation]
\begin{em}

If $H$ is a finite dimensional space, then the classical invariance principle, applied with the classical energy (\ref{defn:E}) as Lyapunov function, proves that all solutions to the unforced equation (\ref{eqn:f=0}) converge to one of the three equilibria.

If the dimension of $H$ is infinite, the convergence to equilibria for solutions to the unforced equation is a corollary of our main result, but it does not follow immediately from the invariance principle. Indeed, in order to apply this principle one has to know a priori that trajectories are precompact, a property that does not seem so easy to obtain in infinite dimension without exploiting the full machinery introduced in the proof of our main result. 

\end{em}
\end{rmk}

\subsection{Main result and consequences} 

For our main result we consider the full equation (\ref{eqn:duffing}) with a \emph{small bounded} forcing term. We show that, if the external force is asymptotically small enough, then every solution lies for $t$ large in a neighborhood of one of the three stationary solutions to the unforced equation (\ref{eqn:f=0}). Moreover, any two solutions of (\ref{eqn:duffing}) that are eventually close to the same stationary solution of (\ref{eqn:f=0}) are actually asymptotic to each other as $t\to +\infty$. The statement is the following.

\begin{thm}[Asymptotic behavior for the equation with small external force]\label{thm:main}

Let $H$ be a Hilbert space, let $\lambda_{1}<\lambda<\lambda_{2}$ be three positive real numbers, let $A\in\gaph$, let $f:[0,+\infty)\to H$ be a bounded continuous function, and let $\sigma_{0}$ be defined by (\ref{defn:sigma-0}).

Then there exist two positive constants $\ep_{0}$ and $M_{0}$, depending only on the three parameters $\lambda$, $\lambda_{1}$, $\lambda_{2}$, for which the following statements hold true whenever
\begin{equation}
\limsup_{t\to+\infty}|f(t)|\leq\ep_{0}.
\label{hp:main}
\end{equation}

\begin{enumerate}
   \renewcommand{\labelenumi}{(\arabic{enumi})}
  
	 \item \emph{(Alternative)} For every solution $u(t)$ to (\ref{eqn:duffing}), there exists $\sigma\in\{-\sigma_{0},0,\sigma_{0}\}$ such that
	 \begin{equation}
	 \limsup_{t\to+\infty}\left(|u'(t)|+|A(u(t)-\sigma e_{1})|\strut\right)\leq M_{0}\limsup_{t\to+\infty}|f(t)|.
	 \label{th:main-alternative}
	 \end{equation}
	   
  \item \emph{(Asymptotic convergence)} If $u(t)$ and $v(t)$ are any two solutions to (\ref{eqn:duffing}) satisfying (\ref{th:main-alternative}) with the same $\sigma\in\{-\sigma_{0},0,\sigma_{0}\}$, then $u(t)$ and $v(t)$ are asymptotic to each other in the sense that
  \begin{equation}
  \lim_{t\to +\infty}\left(u(t)-v(t),u'(t)-v'(t)\strut\right)=(0,0)
  \quad\quad
  \mbox{in }\dath.
  \label{th:main-asymptotic}
  \end{equation}

	\item \emph{(Basin of attraction of stable solutions)} The set of initial data $(u_{0},u_{1})$ for which the solution to (\ref{eqn:duffing})--(\ref{eqn:data}) satisfies (\ref{th:main-alternative}) with a given $\sigma\in\{-\sigma_{0},\sigma_{0}\}$ is a nonempty open subset of $\dath$. 
	
	\item \emph{(Stable manifold of unstable solution)} The set of initial data $(u_{0},u_{1})$ for which the solution to (\ref{eqn:duffing})--(\ref{eqn:data}) satisfies (\ref{th:main-alternative}) with $\sigma=0$ is a nonempty closed subset of $\dath$.
	
\end{enumerate}

\end{thm}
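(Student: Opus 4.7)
The plan is to exploit the spectral gap by writing $u(t)=\varphi(t)e_{1}+w(t)$ with $\langle w(t),e_{1}\rangle=0$. On $e_{1}^{\perp}$ the operator $A^{2}-\lambda A$ is coercive, with lower bound $\lambda_{2}(\lambda_{2}-\lambda)>0$, so $w$ behaves like a damped wave in a "hardening" regime; on $\mathrm{span}(e_{1})$, the coefficient of the linear part is $\lambda_{1}(\lambda_{1}-\lambda)<0$ and $\varphi$ satisfies a perturbed scalar Duffing equation in the softening regime with a double-well potential. My first step is to prove an asymptotic bound of the form
\[
\limsup_{t\to+\infty}\bigl(|w'(t)|+|Aw(t)|\bigr)\leq K_{0}\limsup_{t\to+\infty}|f(t)|
\]
for a constant $K_{0}$ depending only on $\lambda,\lambda_{1},\lambda_{2}$. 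I would use a modified Lyapunov functional of the form $\tfrac12|w'|^{2}+\tfrac12|Aw|^{2}-\tfrac{\lambda}{2}|A^{1/2}w|^{2}+\alpha\langle w',w\rangle$: the gap condition makes the linear part coercive on $e_{1}^{\perp}$, the cubic nonlinearity contributes positively to the energy, and an a priori bound on $\varphi$ absorbs the cross-coupling, yielding a dissipative inequality with forcing of order $|f|$.

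Once $w$ is driven to size $O(\ep_{0})$, the scalar component $\varphi$ satisfies
\[
\varphi''+\varphi'+\lambda_{1}(\lambda_{1}-\lambda)\varphi+\lambda_{1}^{2}\varphi^{3}=g(t),\qquad \limsup_{t\to+\infty}|g(t)|=O(\ep_{0}),
\]
where $g$ collects $\langle f(t),e_{1}\rangle$ and the cross-coupling with $w$. To handle this without the restriction $\delta\ge 2\sqrt 2$ of \cite{haraux:duffing}, I would argue directly in the $(\varphi,\varphi')$ phase plane: outside small neighborhoods of the three critical points $(-\sigma_{0},0)$, $(0,0)$, $(\sigma_{0},0)$, the Duffing energy $\mathcal E(\varphi,\varphi'):=\tfrac12(\varphi')^{2}+\tfrac{\lambda_{1}(\lambda_{1}-\lambda)}{2}\varphi^{2}+\tfrac{\lambda_{1}^{2}}{4}\varphi^{4}$, possibly augmented by a term $\alpha\varphi\varphi'$, dissipates at a quantitatively positive rate, so for $\ep_{0}$ small enough the perturbation cannot prevent eventual capture by one of the three balls. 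Combined with the $w$-estimate, this yields Part~(1) with a suitable $M_{0}$.

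For Part~(2), I would let $z:=u-v$: it solves a linear non-autonomous equation whose coefficients converge to those of the linearization of (\ref{eqn:f=0}) around $\sigma e_{1}$. On $e_{1}^{\perp}$ this linearization is coercive uniformly in $\sigma\in\{-\sigma_{0},0,\sigma_{0}\}$; on $\mathrm{span}(e_{1})$ it reduces to $\psi''+\psi'+\lambda_{1}(\lambda_{1}+3\sigma^{2}\lambda_{1}-\lambda)\psi=0$, which is strictly stable for $\sigma=\pm\sigma_{0}$ (yielding exponential decay of $z$) and a saddle for $\sigma=0$. In the latter case, the assumption that both $u$ and $v$ remain trapped in a small ball about $0$ for all large $t$ forces the projection of $z$ onto the unstable mode to vanish asymptotically, and again $z\to 0$ in $\dath$.

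Parts~(3) and~(4) are soft consequences. Openness of the basins $B_{\pm\sigma_{0}}$ follows from trapping combined with continuous dependence on initial data (Theorem~\ref{thmbibl:wp}): once the trajectory from $(u_{0},u_{1})$ enters the attracting ball about $\pm\sigma_{0}e_{1}$ at some time $T$, a whole $\dath$-neighborhood of $(u_{0},u_{1})$ does so as well, and the ball is forward-invariant. Nonemptiness is obtained by choosing initial data close to $\pm\sigma_{0}e_{1}$. Then $B_{0}=\dath\setminus(B_{-\sigma_{0}}\cup B_{\sigma_{0}})$ is closed by Part~(1), and nonempty by a connectedness argument along any continuous arc in $\dath$ joining $(-\sigma_{0}e_{1},0)$ to $(\sigma_{0}e_{1},0)$, since the disjoint open sets $B_{\pm\sigma_{0}}$ cannot cover the connected parameter interval. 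The main obstacle will be the quantitative phase-plane analysis behind Part~(1) --- producing a Lyapunov estimate that survives with the canonical damping $\delta=1$, thereby removing the $\delta\ge 2\sqrt 2$ restriction of \cite{haraux:duffing}, while simultaneously controlling the cross-coupling with $w$ in the infinite-dimensional setting.
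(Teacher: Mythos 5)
Your decomposition $u=\varphi e_{1}+w$ and the strategy of first obtaining an $O(\ep_{0})$ ultimate bound for the orthogonal part $w$ (via a coercive corrected Lyapunov functional, thanks to the spectral gap), then reducing to a perturbed scalar Duffing equation for $\varphi$, is exactly the paper's setup; your handling of Parts (2)--(4) (linearization, continuous dependence, connectedness) also matches. But the mechanism you sketch for the crux of Part (1) has a genuine gap, which you partially acknowledge: a perturbed Duffing energy $\mathcal E+\alpha\varphi\varphi'$ does \emph{not} dissipate at a uniformly positive rate outside small neighborhoods of the three critical points when $\delta=1$, because the term $-\alpha\varphi W'(\varphi)$ has the wrong sign near the saddle (this is precisely the source of the $\delta\ge 2\sqrt2$ restriction in the reference you cite). ``Eventual capture'' does not follow from such a naive dissipation estimate, since a trajectory can repeatedly enter and exit a neighborhood of the unstable equilibrium.

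What the paper does instead is avoid any dissipation estimate near the saddle. It sets up a trichotomy on $L:=\limsup_{t\to+\infty}|\langle u(t),e_{1}\rangle|$. If $L\le\beta_{0}$ (a fixed small threshold), a separate ``unstable regime'' argument shows the whole solution settles within $O(\ep_{0})$ of the origin. If $L>\beta_{0}$, a calculus lemma (Lemma 4.1: if $\varphi$ is bounded and $C^{1}$ there is a sequence $t_{n}\to+\infty$ with $\varphi(t_{n})\to L$ and $\varphi'(t_{n})\to 0$) produces a time $T_{0}$ at which the first component is safely past $\beta_{0}$ with small velocity and with the global corrected energy already small (by the ultimate-bound proposition). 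From such a $T_{0}$, a potential-well trapping argument (with a second corrected scalar energy $R(t)$ that is genuinely dissipative \emph{inside} the interval $[\beta_{1},\sqrt{2}\sigma_{0}]$, using $(x-\sigma_{0})W'(x)\ge K_{3}(x-\sigma_{0})^{2}$ there) shows the solution never exits the well. This limsup selection of $T_{0}$ plus one-sided trapping is the replacement for the phase-plane dissipation estimate you were hoping for, and it is the precise point at which the paper escapes the large-damping restriction. Without this or an equivalent device, your Part (1) argument does not close.
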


Statement~(1) of Theorem~\ref{thm:main} implies in particular that all solutions to the unforced equation (\ref{eqn:f=0}) converge to one of the equilibria. Actually, the following more general result is a special case of (\ref{th:main-alternative}).

\begin{cor}\label{asymp}
Under the assumptions of Theorem~\ref{thm:main}, if in addition $f(t)\to 0$ as $t\to +\infty$, then every solution to the forced equation (\ref{eqn:duffing}) converges to one of the stationary solutions to the unforced equation (\ref{eqn:f=0}).
\end{cor}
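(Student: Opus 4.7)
The plan is to apply part~(1) of Theorem~\ref{thm:main} directly, exploiting the fact that $f(t)\to 0$ forces the right-hand side of (\ref{th:main-alternative}) to vanish.

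First I would observe that, since $f(t)\to 0$ as $t\to+\infty$, one has $\limsup_{t\to+\infty}|f(t)|=0$. In particular (\ref{hp:main}) is trivially satisfied (for the $\ep_{0}$ provided by the theorem), so the hypotheses of Theorem~\ref{thm:main} are in force. Applying part~(1) to an arbitrary solution $u(t)$ of (\ref{eqn:duffing}) then yields some $\sigma\in\{-\sigma_{0},0,\sigma_{0}\}$ (depending on $u$) such that
$$\limsup_{t\to+\infty}\left(|u'(t)|+|A(u(t)-\sigma e_{1})|\strut\right)\leq M_{0}\cdot 0 = 0.$$

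Since the quantity under the $\limsup$ is non-negative, this forces $\limsup_{t\to+\infty}(|u'(t)|+|A(u(t)-\sigma e_{1})|)=0$, and therefore each non-negative summand tends to zero: $|u'(t)|\to 0$ in $H$ and $|A(u(t)-\sigma e_{1})|\to 0$. In the natural phase space $D(A)\times H$ this is exactly the convergence $(u(t),u'(t))\to(\sigma e_{1},0)$ to one of the three stationary solutions of the unforced equation (\ref{eqn:f=0}), which is the conclusion of the corollary.

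I do not see a genuine obstacle: the corollary is essentially a direct reading of part~(1) of Theorem~\ref{thm:main} in the degenerate case $\limsup|f|=0$, where the ``tubular'' estimate (\ref{th:main-alternative}) collapses into an actual limit. The only minor point worth spelling out is the interpretation of ``converges to a stationary solution'' as convergence of the trajectory $(u(t),u'(t))$ in $D(A)\times H$, which is the natural topology dictated by the well-posedness statement of Theorem~\ref{thmbibl:wp}.
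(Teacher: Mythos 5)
Your proof is correct and coincides with the paper's own reasoning: the paper simply notes that Corollary~\ref{asymp} is ``a special case of (\ref{th:main-alternative})'', which is exactly your observation that $\limsup_{t\to+\infty}|f(t)|=0$ forces the right-hand side of (\ref{th:main-alternative}) to vanish, yielding convergence of $(u(t),u'(t))$ to $(\sigma e_{1},0)$ in $D(A)\times H$.
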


Another important consequence of Theorem~\ref{thm:main} is the following. 

\begin{cor}\label{3sol}

Let $H$, $\lambda_{1}<\lambda<\lambda_{2}$, and $A\in\gaph$ be as in Theorem~\ref{thm:main}, let $\sigma_{0}$ be defined by (\ref{defn:sigma-0}), and let $f:\re\to H$ be a globally defined forcing term.

Then there exist two positive constants $\ep_{0}$ and $r_{0}$ with the following properties.

\begin{enumerate}
\renewcommand{\labelenumi}{(\arabic{enumi})}

\item \emph{(Existence of three special bounded solutions)} If $|f(t)|\leq\ep_{0}$ for every $t\in\re$, then for every $\sigma\in\{-\sigma_{0},0,\sigma_{0}\}$ there exists a unique solution $u_{\sigma}(t)$ to (\ref{eqn:duffing}) such that
\begin{equation}
|u_{\sigma}'(t)|^{2}+|A(u_{\sigma}(t)-\sigma e_{1})|^{2}\leq r_{0}
\qquad
\forall t\in\re.
\nonumber
\end{equation}

\item \emph{(Asymptotic convergence)} Every solution to (\ref{eqn:duffing}) is asymptotic in $D(A)\times H$, as $t\to +\infty$, to one of the three special solutions.

\item \emph{(Periodic case)} If $f(t)$ is $T$-periodic, then the three special solutions are $T$-periodic as well, and they are the unique  periodic solutions to the equation (in particular, subharmonic solutions do not exist).

\item \emph{(Almost periodic case)} If in addition $f(t)$ is almost periodic with values in $H$, then each of the three special solutions $u_{\sigma}(t)$ is such that the vector $ U_{\sigma}(t) : = (u_{\sigma}(t), u_{\sigma}'(t)) $ is almost periodic  with values in the energy space  $D(A)\times H$, with the module containment property. Moreover, the $u_{\sigma}(t)$ are the unique solutions of  the equation to be almost periodic with values in any Banach space $Z$ such that $H\subseteq Z $ with continuous imbedding.

\end{enumerate}

\end{cor}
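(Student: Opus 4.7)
The plan is to obtain all four statements from Theorem~\ref{thm:main} combined with a limiting construction for solutions bounded on all of $\re$.

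For statement~(1), I would construct each $u_{\sigma}$ as a subsequential limit of solutions launched far in the past. For the stable equilibria $\sigma\in\{-\sigma_{0},\sigma_{0}\}$, I would let $u_{n}$ be the solution with $u_{n}(-n)=\sigma e_{1}$ and $u_{n}'(-n)=0$, available backward in time by Remark~\ref{rmk:backward}; the smallness of $f$ together with the basin of attraction property in Theorem~\ref{thm:main}(3) guarantees that $u_{n}$ remains in a fixed $\dath$-ball around $(\sigma e_{1},0)$ of radius $O(\ep_{0})$ for all $t\geq -n$. For $\sigma=0$, I would invoke Theorem~\ref{thm:main}(4) to obtain, at each time $-n$, initial data whose forward trajectory satisfies~(\ref{th:main-alternative}) with $\sigma=0$, producing analogous bounded trajectories. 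Weak compactness in $\dath$ and a diagonal extraction over compact time intervals, using continuous dependence, then yield a global solution $u_{\sigma}$ on $\re$ with the required bound. For uniqueness, the difference $w$ of two such solutions satisfies a small perturbation of the linearization at $\sigma e_{1}$; a mode-by-mode analysis (decomposing along $e_{1}$ and its orthogonal complement) shows that this linearization has no nonzero solution bounded on $\re$: for $\sigma=\pm\sigma_{0}$ all characteristic roots have negative real part and thus grow backward in time, while for $\sigma=0$ the first mode has one positive and one negative real root (killed respectively by forward and backward boundedness) and all higher modes are as in the stable case. Smallness propagates this rigidity to $w\equiv 0$.

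Statements~(2) and~(3) follow cleanly. For~(2), any solution $u$ satisfies the alternative in Theorem~\ref{thm:main}(1) for some $\sigma$, and so does $u_{\sigma}$ (trivially from its uniform boundedness near $\sigma e_{1}$), so Theorem~\ref{thm:main}(2) forces $u-u_{\sigma}\to 0$ in $\dath$. For~(3), periodicity of $f$ makes $u_{\sigma}(\cdot+T)$ a bounded solution near $\sigma e_{1}$, hence equal to $u_{\sigma}$ by the uniqueness in~(1); conversely, any periodic solution is asymptotic to some $u_{\sigma}$ by~(2), and its own periodicity upgrades this to equality.

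Statement~(4) is the main obstacle. To establish almost-periodicity of $U_{\sigma}=(u_{\sigma},u_{\sigma}')$ with values in $\dath$, I would verify Bochner's criterion: from any sequence $\tau_{n}$ I would extract, by almost-periodicity of $f$, a subsequence along which $f(\cdot+\tau_{n})$ converges uniformly on $\re$ to some $\tilde f$; the shifted trajectories $U_{\sigma}(\cdot+\tau_{n})$ are bounded solutions of the shifted equations, and by the uniqueness of statement~(1) together with continuous dependence on the forcing \emph{uniform on $\re$}, they converge uniformly in $\dath$ to the unique bounded solution associated with $\tilde f$. The same framework yields module containment. For uniqueness in a larger space $Z\supseteq H$, a $Z$-almost periodic solution $v$ is bounded in $Z$; I would bootstrap via the equation and the dissipation to deduce boundedness in $\dath$, and then statement~(1) forces $v=u_{\sigma}$ for the appropriate $\sigma$. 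The genuine difficulties lie in the uniform-on-$\re$ continuous dependence required for Bochner's criterion (which is not furnished directly by Theorem~\ref{thmbibl:wp} and must be extracted from the dissipative structure used in the proof of Theorem~\ref{thm:main}) and in the regularity bootstrap needed to turn $Z$-boundedness into $\dath$-boundedness.
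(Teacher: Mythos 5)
Your approach to statement~(1) — constructing each $u_{\sigma}$ as a limit of trajectories launched further and further in the past — is genuinely different from the paper's. The paper linearizes around $\sigma e_{1}$, writes the equation as $w''+w'+Lw=f+g(w)$ with $L$ coercive (for $\sigma=\pm\sigma_0$) or admitting an exponential dichotomy (for $\sigma=0$), and applies the contraction mapping principle in $X=C_{b}(\re,V)\cap C^{1}_{b}(\re,H)\cap C^{2}_{b}(\re,V')$. This gives existence, uniqueness, and — crucially — \emph{Lipschitz dependence of $u_\sigma$ on $f$ in the $C_b(\re,H)$ topology}, all in one stroke. Your route, even when it works, does not yield this Lipschitz dependence, and that is precisely the ingredient the paper later exploits for statement~(4).

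There are three concrete gaps in your proposal. First, for $\sigma=0$ the launching construction is uncontrolled: Theorem~\ref{thm:main}(4) only asserts the stable set is nonempty and closed, with no a priori bound on the size of the initial data in it, so your family $u_n$ need not be uniformly bounded on $[-n,+\infty)$ and no limit can be extracted. (A shooting argument on the segment joining $\pm\sigma_0 e_1$ would give bounded choices, but that is not what you wrote.) Second, even in the stable case, passing to the limit uses weak compactness in $D(A)\times H$, but the nonlinear term $|A^{1/2}u|^2Au$ is not weakly sequentially continuous; to identify the limit as a solution you would need strong compactness (e.g.\ a compact resolvent of $A$), which the hypotheses do not provide. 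The fixed point argument in $C_b(\re,\cdot)$ avoids any compactness altogether. Third, your linearized "mode-by-mode" uniqueness argument dismisses the nonlinear perturbation too quickly; the contraction estimate that actually controls it is the content of the paper's $F(f,v)$ map being a $1/2$-contraction.

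For statement~(4) you correctly diagnose the difficulty but do not resolve it. The Bochner double-sequence approach you sketch requires uniform-on-$\re$ continuous dependence on the forcing, which you concede is missing; the paper expressly points out that the Bochner route ``requires to know that bounded solutions have precompact range in the energy space, which does not seem easy to obtain,'' and instead reads almost periodicity and module containment directly off the Lipschitz map $f\mapsto u_\sigma$ furnished by the contraction. Your $Z$-uniqueness bootstrap is also a detour: the paper's argument is simply that by statement~(2) any almost periodic solution is asymptotic to some $u_\sigma$, and two almost periodic functions asymptotic to each other coincide, even in a weak target space. Statements~(2) and~(3) in your proposal match the paper's treatment.
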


\begin{rmk}\label{rmk:3sol}
\begin{em} 

When $f$ is just bounded and small, we do not claim that the three special solutions are the unique solutions that are bounded on the whole real line. Indeed, even when $f=0$, there are most probably infinitely many heteroclinic orbits which lie in the unstable manifold of the trivial equilibrium, and converge to $0$ as $t\to -\infty$, and to one of the two stable equilibria as $t\to +\infty$. These orbits do not fit in the framework of statement~(1) of Corollary~\ref{3sol} because the do not lie in a small neighborhood of any equilibrium.

\end{em}
\end{rmk}

\setcounter{equation}{0}
\section{Proof of main results}\label{sec:proof-sketch}

\subsection{Proof of Theorem~\ref{thm:main} -- Auxiliary results}\label{sec:strategy}

The proof of Theorem~\ref{thm:main} relies on four auxiliary results, whose proof is postponed to section~\ref{sec:proof-prop}. To begin with, we introduce the constant
\begin{equation}
\gamma_{0}:=\frac{1}{8}\min\left\{1,\lambda_{1}(\lambda-\lambda_{1}),\lambda_{2}(\lambda_{2}-\lambda)\strut\right\},
\label{defn:gamma0}
\end{equation}
and the linear operator $P$ on $H$ such that $Pe_{1}=e_{1}/6$, and $Px=x$ for every $x$ in the subspace of $H$ orthogonal to $e_{1}$.

For every solution $u(t)$ to (\ref{eqn:duffing}), we consider the classical energy $E(t)$ defined by (\ref{defn:E}), and the corrected energy
\begin{equation}
F(t):=E(t)+2\gamma_{0}\langle Pu(t),u'(t)\rangle+\gamma_{0}\langle Pu(t),u(t)\rangle.
\label{defn:F}
\end{equation}

In the first result we prove that the energy $F(t)$ of solutions to (\ref{eqn:duffing}) is bounded for $t$ large in terms of the norm of the forcing term. As a consequence, all solutions are bounded in $\dath$.

\begin{prop}[Ultimate bound on solutions]\label{prop:ultimate}

Let us consider equation (\ref{eqn:duffing}) under the assumptions of Theorem~\ref{thm:main}. Let $F(t)$ be the energy defined in (\ref{defn:F}).

Then there exists a positive constant $M_{1}$ such that
\begin{equation}
\limsup_{t\to+\infty}F(t)\leq M_{1}\limsup_{t\to+\infty}|f(t)|^{2},	
\label{th:ultimate}
\end{equation}
and there exist two positive constants $M_{2}$ and $M_{3}$ such that
\begin{equation}
\limsup_{t\to+\infty}\left(|u'(t)|^{2}+|Au(t)|^{2}\right)\leq M_{2}+M_{3}\limsup_{t\to+\infty}|f(t)|^{2}.	
\label{th:ultimate-u}
\end{equation}
\end{prop}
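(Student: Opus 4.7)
The plan is to derive a differential inequality of the form
$$F'(t) + \gamma_{0}F(t) \leq C_{1}|f(t)|^{2}$$
for some $C_{1}>0$ depending only on $\lambda_{1},\lambda,\lambda_{2}$, and then to apply Gronwall's lemma to conclude that
$$\limsup_{t\to+\infty}F(t) \leq \frac{C_{1}}{\gamma_{0}}\limsup_{t\to+\infty}|f(t)|^{2},$$
which is (\ref{th:ultimate}) with $M_{1}=C_{1}/\gamma_{0}$. For the conversion to (\ref{th:ultimate-u}), I would combine (\ref{th:ultimate}) with a lower bound $F(t) \geq \kappa_{1}\bigl(|u'(t)|^{2}+|Au(t)|^{2}\bigr) - \kappa_{2}$, obtained by rewriting the potential in (\ref{defn:E}) as $\tfrac{1}{4}(|A^{1/2}u|^{2}-\lambda)^{2} - \lambda^{2}/4$, observing that $\gamma_{0}\langle Pu,u\rangle\geq 0$, and applying Young's inequality $|2\gamma_{0}\langle Pu,u'\rangle| \leq \gamma_{0}|u'|^{2} + \gamma_{0}|Pu|^{2}$ so as to absorb the $|Pu|^{2}$ by the positive term $\gamma_{0}\langle Pu,u\rangle$ and the $|u'|^{2}$ by a fraction of $\tfrac{1}{2}|u'|^{2}$ in $E$.

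The computation of $F'$ starts from (\ref{defn:F}); differentiating and substituting the equation (\ref{eqn:duffing}) to eliminate $u''$, the self-adjointness of $P$ yields $\frac{d}{dt}\langle Pu,u\rangle = 2\langle Pu,u'\rangle$, whose contribution cancels exactly the $-2\gamma_{0}\langle Pu,u'\rangle$ that arises from the dissipative term $u'$ of the equation. The outcome is the clean identity
$$F'(t) = -|u'|^{2} + \langle u',f\rangle + 2\gamma_{0}\langle Pu',u'\rangle + 2\gamma_{0}\langle Pu,f\rangle - 2\gamma_{0}\langle Pu,A^{2}u\rangle + 2\gamma_{0}\bigl(\lambda - |A^{1/2}u|^{2}\bigr)\langle Pu,Au\rangle.$$
Next I would decompose every vector according to the gap splitting $u = u_{1}e_{1} + w$ with $w\perp e_{1}$, so that $\langle Pu,Au\rangle = \lambda_{1}u_{1}^{2}/6 + |A^{1/2}w|^{2}$, $\langle Pu,A^{2}u\rangle = \lambda_{1}^{2}u_{1}^{2}/6 + |Aw|^{2}$, and $|A^{1/2}u|^{2} = \lambda_{1}u_{1}^{2} + |A^{1/2}w|^{2}$. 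Expanding $F'(t) + \gamma_{0}F(t)$ in the monomials $u_{1}^{2}$, $u_{1}^{4}$, $|A^{1/2}w|^{2}$, $|A^{1/2}w|^{4}$, $|Aw|^{2}$, and $u_{1}^{2}|A^{1/2}w|^{2}$, and bounding the forcing contributions by Young's inequality, one obtains strictly negative coefficients for each pure monomial except for a residual positive linear contribution in $|A^{1/2}w|^{2}$ of order $\gamma_{0}\lambda$. This residual is killed by using the gap estimate $|Aw|^{2}\geq\lambda_{2}|A^{1/2}w|^{2}$ to transfer a fraction of the negative $|Aw|^{2}$-coefficient: since $\lambda<\lambda_{2}$, the net coefficient of $|A^{1/2}w|^{2}$ becomes negative. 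The $|u|^{2}$-penalty coming from $2\gamma_{0}\langle Pu,f\rangle$ is absorbed similarly, exploiting also $|w|^{2}\leq\lambda_{2}^{-2}|Aw|^{2}$.

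The main obstacle will be the delicate balancing of the $u_{1}^{4}$ and $u_{1}^{2}$ coefficients, which is what forces the specific choice $Pe_{1} = e_{1}/6$. The quartic potential $\tfrac{1}{4}|A^{1/2}u|^{4}$ in $F$ contributes $+\gamma_{0}\lambda_{1}^{2}u_{1}^{4}/4$ to $\gamma_{0}F$, while the multiplier term $-2\gamma_{0}|A^{1/2}u|^{2}\langle Pu,Au\rangle$ contributes $-2\gamma_{0}\mu\lambda_{1}^{2}u_{1}^{4}$ to $F'$, where $\mu$ denotes the first eigenvalue of $P$; negativity of the net coefficient therefore requires $\mu\geq 1/8$. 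Symmetrically, negativity of the net $u_{1}^{2}$-coefficient requires $\mu<1/4$, because the positive $u_{1}^{2}$-contribution coming from $2\gamma_{0}\lambda\langle Pu,Au\rangle$ must be dominated by the negative one produced by $\gamma_{0}\cdot(-\tfrac{\lambda}{2}|A^{1/2}u|^{2})$ inside $\gamma_{0}E$. The value $\mu = 1/6$ lies comfortably inside the admissible window $[1/8,1/4]$, and the factor $1/8$ in the definition (\ref{defn:gamma0}) of $\gamma_{0}$ is precisely tuned so that, after all the $O(\gamma_{0}^{2})$ perturbations coming from Young's inequalities and from the $\gamma_{0}^{2}\langle Pu,u\rangle$ term inside $\gamma_{0}F$, the intended dissipation inequality still holds; from there, Gronwall and the energy-like lower bound on $F$ deliver both (\ref{th:ultimate}) and (\ref{th:ultimate-u}).
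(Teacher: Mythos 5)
Your proposal is correct and follows essentially the same path as the paper: compute $F'$, exploit the gap splitting $u=u_1e_1+w$ together with the coercivity bound $|Aw|^2\ge\lambda_2|A^{1/2}w|^2$ to obtain $F'+\gamma_0F\le C|f|^2$, then combine Gronwall with the lower bound $F\ge\kappa_1\bigl(|u'|^2+|Au|^2\bigr)-\kappa_2$. The paper organizes the dissipation estimate by first proving separate inequalities for $F_-'$, $F_+'$ and $I'$ (whose cross terms cancel), whereas you balance the monomial coefficients of $F'+\gamma_0F$ all at once, but the underlying computation is the same; your observation that any $\mu\in[1/8,1/4)$ for $Pe_1=\mu e_1$ would work is a correct explanation of the otherwise unmotivated choice $\mu=1/6$.
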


In the second result we deal with solutions which lie \emph{eventually}  in the instability region, namely with their \emph{first component} close to the origin. We show that in this case the \emph{whole solution} is eventually close to the origin, within a distance depending on the norm of the forcing term. 

\begin{prop}[Solutions in the unstable regime]\label{prop:unstable}

Let us consider equation (\ref{eqn:duffing}) under the assumptions of Theorem~\ref{thm:main}.

Then there exist two constants $\beta_{0}\in(0,\sigma_{0})$ and $M_{4}>0$ for which the following implication is true:
\begin{equation}
\boxa{\begin{array}{c}
\displaystyle\limsup_{t\to+\infty}|f(t)|\leq 1 \\
\noalign{\vspace{1ex}}
\displaystyle\limsup_{t\to+\infty}|\langle u(t),e_{1}\rangle|\leq\beta_{0}
\end{array}}
\Longrightarrow
\boxa{\limsup_{t\to+\infty}\left(|u'(t)|+|Au(t)|\strut\right)\leq M_{4}\limsup_{t\to+\infty}|f(t)|}
\label{th:unstable}
\end{equation}

\end{prop}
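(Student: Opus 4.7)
The plan is to exploit the splitting $u = u_1 e_1 + w$ with $u_1 := \langle u,e_1\rangle$ and $w\in e_1^\perp$. The dynamics decouples into a genuinely stable infinite-dimensional part for $w$ (where $A^2-\lambda A$ is coercive because $\lambda<\lambda_2$) and an unstable scalar Duffing-like equation for $u_1$, which will be tamed using the hypothesis $\limsup|u_1|\le\beta_0$. As a preliminary input, Proposition~\ref{prop:ultimate} provides a uniform bound on $|u'|^2+|Au|^2$ for $t$ large (since $|f|\le 1$ eventually), which in particular gives a universal bound on $|u_1'|$.

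First, I would establish decay for $w$. The projection of~(\ref{eqn:duffing}) onto $e_1^\perp$ reads
$$w''+w'+A^2w-\lambda Aw+\bigl(\lambda_1 u_1^2+|A^{1/2}w|^2\bigr)Aw=g(t):=f(t)-\langle f(t),e_1\rangle e_1.$$
Using the corrected energy
$$G_w:=\tfrac12|w'|^2+\tfrac12\bigl(|Aw|^2-\lambda|A^{1/2}w|^2\bigr)+\tfrac14|A^{1/2}w|^4+\alpha\langle w,w'\rangle+\tfrac{\alpha}{2}|w|^2$$
for a small fixed $\alpha>0$, and the coercivity $|Aw|^2-\lambda|A^{1/2}w|^2\ge\frac{\lambda_2-\lambda}{\lambda_2}|Aw|^2$ on $e_1^\perp$, a direct computation yields an inequality of the form $G_w'\le-c_0 G_w+C_0|f|^2$; the only delicate term is the coupling $-\lambda_1 u_1^2\langle Aw,w'\rangle$, which is absorbed by Young's inequality provided $\beta_0$ is chosen small enough. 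Gronwall then gives
$$\limsup_{t\to\infty}\bigl(|w'|^2+|Aw|^2\bigr)\le C_1(\limsup|f|)^2.$$

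Next, I would treat $u_1$, which satisfies the scalar equation
$$u_1''+u_1'-a u_1+\lambda_1^2 u_1^3=h(t),\qquad a:=\lambda_1(\lambda-\lambda_1),\quad h:=\langle f,e_1\rangle-\lambda_1|A^{1/2}w|^2 u_1,$$
so by the previous step $\limsup|h|\le C_2\limsup|f|$. The linearization at $u_1=0$ has eigenvalues $\mu_\pm=(-1\pm\sqrt{1+4a})/2$, one unstable and one stable. Passing to the eigencoordinates $\xi:=u_1'-\mu_- u_1$ and $\eta:=u_1'-\mu_+ u_1$, a short computation using $\mu_+\mu_-=-a$ gives
$$\xi'=\mu_+\xi-\lambda_1^2 u_1^3+h,\qquad \eta'=\mu_-\eta-\lambda_1^2 u_1^3+h.$$
For $\eta$, standard variation-of-constants yields $\limsup|\eta|\le|\mu_-|^{-1}\bigl(\lambda_1^2(\limsup|u_1|)^3+\limsup|h|\bigr)$. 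For $\xi$ — the crux — boundedness of $u_1$ and $u_1'$ ensures $\xi$ is bounded; since $\mu_+>0$, solving the ODE backward in time gives the absolutely convergent representation
$$\xi(t_0)=\int_{t_0}^{+\infty}e^{-\mu_+(s-t_0)}\bigl(\lambda_1^2 u_1(s)^3-h(s)\bigr)\,ds,$$
hence the analogous bound $\limsup|\xi|\le\mu_+^{-1}\bigl(\lambda_1^2(\limsup|u_1|)^3+\limsup|h|\bigr)$. Writing $u_1=(\xi-\eta)/(\mu_+-\mu_-)$ and setting $X:=\limsup|u_1|$, one obtains $X\le K\bigl(\lambda_1^2 X^3+\limsup|h|\bigr)$. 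Choosing $\beta_0$ so that $K\lambda_1^2\beta_0^2\le 1/2$ and using $X\le\beta_0$ absorbs the cubic term, giving $X\le C_3\limsup|f|$; the corresponding bound on $|u_1'|$ follows from $u_1'=\mu_+ u_1+\eta$. Adding this to the $w$-estimate yields~(\ref{th:unstable}).

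The main obstacle is the treatment of the unstable mode $\xi$: because the linearization of the $u_1$-equation at $0$ has a genuinely positive eigenvalue, mere boundedness does not automatically imply smallness — the unforced equation already possesses heteroclinic orbits connecting $0$ to $\pm\sigma_0$ that stay bounded while being of order one. The backward-in-time integral representation resolves this by exploiting boundedness to kill the exponentially growing homogeneous mode, while the stabilizing cubic $-\lambda_1^2 u_1^3$ — which is itself small whenever $u_1$ is small — closes the fixed-point-type estimate.
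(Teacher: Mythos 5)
Your proof is correct, and the high-frequency step is essentially the same as the paper's: project onto $e_1^\perp$, where $A^2-\lambda A$ is coercive because $\lambda<\lambda_2$, use a corrected energy, absorb the coupling term $-\lambda_1 u_1^2\langle Aw,w'\rangle$ by Young's inequality using the smallness of $\beta_0$, and integrate the resulting differential inequality. (You should, strictly speaking, record the lower bound $G_w\ge c\left(|w'|^2+|Aw|^2\right)$ analogous to the paper's (\ref{est:F+equiv}), but this is routine from the coercivity you already invoke.)

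The treatment of the scalar component is a genuinely different route. The paper packages the key estimate into Lemma~\ref{lemma:ODE-limsup} --- an ``ultimate bound'' statement for $y''+y'-my=\psi$ with $m>0$ asserting $\limsup|y|\le m^{-1}\limsup|\psi|$ --- whose proof is a maximum-principle-type argument resting on Lemma~6.2 of \cite{haraux:duffing}: extract a sequence $t_n\to\infty$ along which $y(t_n)\to\limsup y$, $y'(t_n)\to 0$, $\limsup y''(t_n)\le 0$, and read off the bound from the equation. You instead diagonalize the linear part explicitly, pass to the eigencoordinates $\xi=u_1'-\mu_-u_1$, $\eta=u_1'-\mu_+u_1$, and treat the stable mode by forward variation of constants and the unstable mode by the backward-in-time integral $\xi(t)=\int_t^{+\infty}e^{-\mu_+(s-t)}\bigl(\lambda_1^2u_1^3-h\bigr)\,ds$, valid precisely because boundedness (supplied by Proposition~\ref{prop:ultimate}) kills the growing homogeneous solution. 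The two routes consume the same input --- global a priori boundedness of the trajectory --- and in fact your constant $K=\frac{1}{\mu_+-\mu_-}\left(\frac{1}{\mu_+}+\frac{1}{|\mu_-|}\right)$ collapses to exactly $\frac{1}{a}=\frac{1}{m}$, so you recover the paper's sharp coefficient. Your argument is more constructive and self-contained (no external lemma needed), while the paper's lemma is stated once and reused later in the proof of Proposition~\ref{prop:asymptotic}, which buys some economy. Your closing observation about why the backward representation is indispensable --- heteroclinics of the unforced problem are bounded yet $O(1)$, so boundedness alone cannot force smallness of the unstable mode --- is the correct reason why an argument specifically designed to eliminate the growing exponential is needed, and it mirrors the role of the maximum principle in the paper.
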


In the third result we consider solutions to (\ref{eqn:duffing}) which \emph{at a given time} are close to the stable stationary solution $\sigma_{0}e_{1}$ to (\ref{eqn:f=0}). We show that these solutions lie eventually in a neighborhood of the stationary solution itself, within a distance depending on the norm of the forcing term.

\begin{prop}[Solutions in the stable regime]\label{prop:stable}

Let us consider equation (\ref{eqn:duffing}) under the assumptions of Theorem~\ref{thm:main}. Let $F(t)$ be the energy defined in (\ref{defn:F}).

Then for every $\beta\in(0,\sigma_{0})$ there exist three constants $\eta>0$, $\ep_{1}>0$ and $M_{5}>0$ for which the following implication is true: 
\begin{equation}
\begin{array}{c}
\fbox{$\begin{array}{c}
\exists\, T_{0}\geq 0 \mbox{ such that } \\
\noalign{\vspace{1ex}}
\displaystyle\sup_{t\geq T_{0}}|f(t)|\leq\ep_{1},\ F(T_{0})<\eta,\ |\langle u'(T_{0}),e_{1}\rangle|<\eta,\ \langle u(T_{0}),e_{1}\rangle>\beta
\end{array}$}   \\
\noalign{\vspace{1ex}}
\Downarrow   \\
\noalign{\vspace{1ex}}
\boxa{\limsup_{t\to+\infty}\left(|u'(t)|+|A(u(t)-\sigma_{0}e_{1})|\strut\right)\leq M_{5}\limsup_{t\to+\infty}|f(t)|}  
\end{array}
\label{th:stable}
\end{equation}

\end{prop}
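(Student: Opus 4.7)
Plan: The strategy is to translate so that $\sigma_0 e_1$ becomes the origin. Setting $w(t):=u(t)-\sigma_0 e_1$ and using $\lambda_1\sigma_0^2=\lambda-\lambda_1$, equation (1.4) becomes
$$w''+w'+\mathcal{L}w+N(w)=f(t),\qquad\mathcal{L}w:=A^2w-\lambda_1 Aw+2\lambda_1(\lambda-\lambda_1)\langle w,e_1\rangle e_1,$$
with $N(w)$ collecting the quadratic and cubic remainders. Splitting $w=w_1e_1+w_\perp$, $w_\perp\perp e_1$, the operator $\mathcal{L}$ acts as multiplication by $2\lambda_1(\lambda-\lambda_1)>0$ on $e_1$ and as $A^2-\lambda_1 A\geq\lambda_2(\lambda_2-\lambda_1)I$ on $e_1^\perp$, so it is coercive with a constant $\mu_0>0$ depending only on $\lambda,\lambda_1,\lambda_2$.

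Mimicking the construction of $F$ centered at the new equilibrium, I would introduce the shifted energy
$$\widetilde{\mathcal E}(t):=E(t)-E(\sigma_0 e_1)=\tfrac12|w'|^2+\tfrac12\bigl(|Aw|^2-\lambda_1|A^{1/2}w|^2\bigr)+\tfrac14\bigl(|A^{1/2}w|^2+2\lambda_1\sigma_0 w_1\bigr)^2$$
(a completion-of-squares calculation makes it manifest that $\widetilde{\mathcal E}\geq 0$, with zeros exactly at $\pm\sigma_0 e_1$), and its corrected variant $\widetilde F(t):=\widetilde{\mathcal E}(t)+\gamma\langle w,w'\rangle+\tfrac\gamma2|w|^2$ for a small $\gamma>0$. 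On a ball $\{|Aw|\leq r_\ast\}$, with $r_\ast$ depending only on $\lambda,\lambda_1,\lambda_2$ and chosen small enough to stay away from $-\sigma_0 e_1$, $\widetilde F$ is equivalent to $|w'|^2+|Aw|^2$. Differentiating along the $w$-equation, using coercivity of $\mathcal{L}$ and Young's inequality to absorb $\langle N(w),w\rangle$ and the forcing, one obtains the Lyapunov estimate
$$\widetilde F'(t)\leq -c_1\widetilde F(t)+c_2|f(t)|^2\qquad\text{as long as }|Aw(t)|\leq r_\ast.$$
A Gronwall integration then yields $\limsup_t\widetilde F(t)\leq(c_2/c_1)\limsup_t|f(t)|^2$, which by coercivity translates into (3.8) with $M_5$ read off from $c_1$ and $c_2$.

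The delicate point, and where I expect the main work to concentrate, is to show that the hypotheses at $T_0$ force the trajectory to enter and never leave $\{|Aw|\leq r_\ast\}$. Indeed $F(T_0)<\eta$ yields only a universal (and not necessarily small) bound on $(u'(T_0),Au(T_0))$, a priori compatible with closeness to $-\sigma_0 e_1$ or to $0$. My plan to rule these out is two-fold. First, I would apply Proposition 3.1 on $[T_0,+\infty)$: for $\ep_1$ small the estimate (3.3) forces $F(t)\leq M_1\ep_1^2+\delta$ for all $t\geq T_0+T^\ast$ with $T^\ast$ a universal time, so the "potential" quantity $V(u(t)):=\tfrac12|Au|^2-\tfrac\lambda2|A^{1/2}u|^2+\tfrac14|A^{1/2}u|^4$ is driven arbitrarily close to its global infimum $-(\lambda-\lambda_1)^2/4$, whose sublevel sets strictly below the saddle value $V(0)=0$ split into two small connected components around $\pm\sigma_0 e_1$. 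Second, on the transient window $[T_0,T_0+T^\ast]$ I would analyze the scalar equation
$$u_1''+u_1'+\lambda_1\bigl(\lambda_1-\lambda+|A^{1/2}u|^2\bigr)u_1=\langle f(t),e_1\rangle$$
for $u_1(t):=\langle u(t),e_1\rangle$, treating the coupling $\lambda_1|A^{1/2}u_\perp|^2 u_1$ as a small perturbation controlled by the universal $F$-bound and choosing $\eta$ small in terms of $\beta$ to keep $u_1(t)>\beta/2$ throughout the window. At time $T_1:=T_0+T^\ast$ we would then have simultaneously $V(u(T_1))$ close to $-(\lambda-\lambda_1)^2/4$ and $u_1(T_1)>\beta/2>0$, which locates $u(T_1)$ in the right well of the potential, hence inside $\{|Aw|\leq r_\ast\}$; from there the Lyapunov analysis of the second paragraph concludes the proof.
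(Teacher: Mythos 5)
Your Lyapunov framework around the shifted equilibrium (linearized operator $\mathcal{L}$, corrected quadratic energy $\widetilde F$, decay of $\widetilde F$ while the solution stays in a small ball, Gronwall) is close in spirit to the paper's construction of $S(t)=R(t)+F_{+}(t)+I(t)$ and the differential inequality $S'\leq-\gamma_1^2 S+2|f|^2$, and that part of your sketch is sound. The genuine gap is in your first step for ruling out an excursion to $0$ or to $-\sigma_0 e_1$: Proposition~\ref{prop:ultimate} gives only an \emph{upper} bound on $F(t)$, namely $F(t)\le M_1\ep_1^2+\delta$ after a transient, and such an upper bound does \emph{not} drive the potential $V(u(t))$ near its infimum $-(\lambda-\lambda_1)^2/4$. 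Indeed $F$ takes negative values near $\pm\sigma_0 e_1$ (the paper computes $F<0$ at $\sigma_0 e_1$ in the basin-of-attraction argument) while $F\approx 0$ near the saddle $(u,u')=(0,0)$, where $V=0$; a small \emph{positive} ceiling on $F$ is therefore perfectly compatible with the trajectory sitting near the unstable equilibrium, exactly the case you need to exclude. So the sublevel-set localization you invoke does not follow, and the subsequent reduction to a bounded transient window $[T_0,T_0+T^*]$ collapses.

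The paper avoids this by never attempting to localize via $V$ or via the ultimate bound on $F$. Instead it builds the shifted energy $S$ directly, and the crucial inequality is at the single time $T_0$: using $F(T_0)<\eta$, $|u_-'(T_0)|<\eta$, $u_-(T_0)>\beta$, together with a carefully chosen $\beta_1\in(0,\beta)$ and $\delta:=W(\beta_1)-W(0)+\tfrac{\gamma_0}{6}\beta^2>0$, one gets $S(T_0)\le W(\beta_1)-\delta/2$, strictly below the barrier level $W(\beta_1)$. A continuity/maximality argument then shows $S(t)\le W(\beta_1)$ and $u_-(t)\ge 0$ for all $t\ge T_0$, which forces $\beta_1\le u_-(t)\le\sqrt2\,\sigma_0$ (the potential-well confinement) and keeps the decay inequality valid forever. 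To repair your argument you would need to replace the appeal to the ultimate $F$-bound with a barrier at $t=T_0$ of exactly this type, i.e.\ to show that the hypotheses at $T_0$ place your shifted energy $\widetilde F$ strictly below its escape level, not merely that $F$ is eventually small.
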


In the last result we show that any two solutions to the forced equation (\ref{eqn:duffing}) that are close enough to the \emph{same stationary solution} to the unforced equation (\ref{eqn:f=0}) are actually asymptotic to each other.

\begin{prop}[Close solutions are asymptotic to each other]\label{prop:asymptotic}

Let us consider equation (\ref{eqn:duffing}) under the assumptions of Theorem~\ref{thm:main}.

Then there exists $r_{0}>0$ with the following property: if $u(t)$ and $v(t)$ are two solutions to (\ref{eqn:duffing}), and there exists $\sigma\in\{-\sigma_{0},0,\sigma_{0}\}$ such that
\begin{equation}
\limsup_{t\to +\infty}\left(|u'(t)|+|A(u(t)-\sigma e_{1})|+|v'(t)|+|A(v(t)-\sigma e_{1})|\right)\leq r_{0},
\label{hp:asymptotic}
\end{equation}
then $u(t)$ and $v(t)$ are asymptotic to each other in the sense of (\ref{th:main-asymptotic}).

\end{prop}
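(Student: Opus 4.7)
The plan is to set $w := u-v$ and derive a closed evolution equation for $w$ in which the forcing $f(t)$ cancels. Subtracting the equation for $v$ from the one for $u$ and using the identity
\[|A^{1/2}u|^{2}Au-|A^{1/2}v|^{2}Av=|A^{1/2}u|^{2}Aw+\langle A(u+v),w\rangle Av,\]
one obtains the closed equation
\[w''+w'+A^{2}w-\lambda Aw+|A^{1/2}u|^{2}Aw+\langle A(u+v),w\rangle Av=0.\]
By (\ref{hp:asymptotic}), for $t$ large both $u(t)$ and $v(t)$ lie in an $r_{0}$-ball around $\sigma e_{1}$ in $D(A)\times H$, so this equation is an $O(r_{0})$ perturbation of the linearization of (\ref{eqn:duffing}) at the corresponding equilibrium.

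For the stable cases $\sigma=\pm\sigma_{0}$, the frozen linearization is
\[w''+w'+A^{2}w-\lambda_{1}Aw+2(\lambda-\lambda_{1})\lambda_{1}\langle w,e_{1}\rangle e_{1}=0,\]
and its linear part is self-adjoint and positive definite, with quadratic form $2(\lambda-\lambda_{1})\lambda_{1}w_{1}^{2}$ on $\mathrm{span}(e_{1})$ and at least $\lambda_{2}(\lambda_{2}-\lambda_{1})|w|^{2}$ on the orthogonal complement. I would introduce the corrected energy
\[\mathcal{E}_{w}(t):=\tfrac{1}{2}|w'|^{2}+\tfrac{1}{2}|Aw|^{2}-\tfrac{\lambda}{2}|A^{1/2}w|^{2}+\tfrac{1}{2}|A^{1/2}u|^{2}|A^{1/2}w|^{2}+(\lambda-\lambda_{1})\lambda_{1}\langle w,e_{1}\rangle^{2},\]
chosen so that the nonlinear contribution $\langle A(u+v),w\rangle\langle Av,w'\rangle$ produces, at leading order, the time-derivative $2\sigma_{0}^{2}\lambda_{1}^{2}w_{1}w_{1}'$ that is exactly absorbed by the last summand. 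Augmenting $\mathcal{E}_{w}$ by the standard Haraux-type multiplier $\alpha\langle w,w'\rangle+\tfrac{\alpha}{2}|w|^{2}$ (or its $P$-weighted analogue from (\ref{defn:F})) yields strict dissipation $-(1-\alpha)|w'|^{2}-\alpha\langle Lw,w\rangle$; the remainder, consisting of cubic terms of order $r_{0}$ (with $|u'|,|v'|$ controlled via (\ref{th:ultimate-u})), can be absorbed into this negative quadratic form provided $r_{0}$ is sufficiently small, giving $\mathcal{E}_{w}'\leq-c\,\mathcal{E}_{w}$ eventually and therefore exponential decay of $|w'|+|Aw|$.

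The case $\sigma=0$ is more delicate because $A^{2}-\lambda A$ is negative on $\mathrm{span}(e_{1})$ and $|A^{1/2}u|^{2},|A^{1/2}v|^{2}$ are now too small to restore positivity. Here I would first invoke Proposition~\ref{prop:unstable}: choosing $r_{0}$ smaller than $\beta_{0}\lambda_{1}$, hypothesis (\ref{hp:asymptotic}) forces $|\langle u(t),e_{1}\rangle|,|\langle v(t),e_{1}\rangle|\leq\beta_{0}$ eventually, so (\ref{th:unstable}) gives that $u,v$ in fact lie within $M_{4}\limsup|f|=O(\ep_{0})$ of the origin in $D(A)\times H$. Splitting $w=w_{1}e_{1}+w_{\perp}$: on $\mathrm{span}(e_{1})^{\perp}$ the operator $A^{2}-\lambda A$ is positive with lower bound $\lambda_{2}(\lambda_{2}-\lambda)$, and a standard energy estimate on the $\perp$-projected equation yields exponential decay of $|w_{\perp}'|+|Aw_{\perp}|$; the scalar equation for $w_{1}$ reads, up to $O(\ep_{0})$ perturbations,
\[w_{1}''+w_{1}'-\lambda_{1}(\lambda-\lambda_{1})w_{1}=O(\ep_{0})\cdot|w|,\]
a hyperbolic saddle ODE, and since $w_{1}$ is bounded on $[T_{0},+\infty)$ the classical projection onto the unstable eigenvector of the associated first-order system forces its component along the unstable eigenvalue to vanish, so $w_{1}$ decays exponentially like the stable eigenvalue $\tfrac{1}{2}(-1-\sqrt{1+4\lambda_{1}(\lambda-\lambda_{1})})$.

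The main obstacle is precisely the $\sigma=0$ case: no natural quadratic form is positive definite on the whole of $D(A)$, and one cannot conclude by a single energy inequality. The detour through Proposition~\ref{prop:unstable} plus the hyperbolic-saddle reduction on $\mathrm{span}(e_{1})$ is what makes the case work, at the cost of breaking the symmetry between the three cases. A secondary, purely technical difficulty -- present in all three cases -- is handling the cross term $\langle A(u+v),w\rangle\langle Av,w'\rangle$ that appears when differentiating $\mathcal{E}_{w}$: it is cubic in the perturbation and not entirely a time-derivative, so its non-leading part must be bounded by Cauchy-Schwarz using the smallness of $w,w'$ supplied by (\ref{hp:asymptotic}) together with the $r_{0}$-closeness of $u,v$ to the equilibrium.
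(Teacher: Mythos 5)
Your strategy is genuinely different from the paper's. Setting $r:=u-v$, the paper writes $r$ as a solution of a \emph{linear} second-order equation with a given right-hand side $g(t)$ that it estimates by $\limsup_{t\to+\infty}|g(t)|\le C\,r_0^{k}\,\limsup_{t\to+\infty}|Ar(t)|$ (with $k=2$ in the unstable case, $k=1$ in the stable ones), then invokes the abstract $\limsup$ bounds of Lemmas~\ref{lemma:ODE-limsup} and~\ref{lemma:PDE-limsup} to obtain $\limsup\bigl(|r'|^2+|Ar|^2\bigr)\le C' r_0^{2k}\limsup\bigl(|r'|^2+|Ar|^2\bigr)$; for $r_0$ small this self-improving estimate forces the $\limsup$ to vanish, which is the whole conclusion. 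For the stable cases your corrected-energy multiplier strategy is a reasonable alternative: after the cancellation between $\langle A(u+v),w\rangle\langle Av,w'\rangle$ at leading order and the derivative of $(\lambda-\lambda_1)\lambda_1\langle w,e_1\rangle^2$, the remaining errors are genuinely $O(r_0)\cdot(\text{quadratic in }w,w')$ and can be absorbed for $r_0$ small, so this route is viable (and would in fact give exponential decay, which is stronger than what the paper's argument yields).

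The $\sigma=0$ case, however, has a gap. First, your detour through Proposition~\ref{prop:unstable} is not available: that proposition requires $\limsup_{t\to+\infty}|f(t)|\le 1$, whereas Proposition~\ref{prop:asymptotic} is stated and proved only under the standing assumptions of Theorem~\ref{thm:main} \emph{without} (\ref{hp:main}); no smallness of $f$ may be used here. (It is also unnecessary: (\ref{hp:asymptotic}) already gives $|A^{1/2}u|^2,|u+v|\cdot|Av|=O(r_0^2)$ eventually, which is all the paper needs.) Second, and more seriously, the two projected equations are coupled in both directions: the right-hand side of the $w_1$-equation contains $O(r_0^2)|w_1|$ as well as $O(r_0^2)|Aw_\perp|$, and the right-hand side of the $w_\perp$-equation likewise contains $O(r_0^2)|w_1|$. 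So you cannot first prove exponential decay of $w_\perp$ and then feed it into the $w_1$-equation; the energy estimate for $w_\perp$ only gives $\limsup(|w_\perp'|^2+|Aw_\perp|^2)\le Cr_0^4\limsup(|w_1|^2+|w_1'|^2+|w_\perp'|^2+|Aw_\perp|^2)$, not decay. Likewise, the saddle argument for $w_1$ yields only $\limsup(|w_1|+|w_1'|)\le C r_0^2\limsup(\text{everything})$, not the exponential decay you claim, because the ``forcing'' depends on $w_1$ itself. The missing, and decisive, step is precisely the bootstrap: adding the two $\limsup$ bounds gives $\limsup\bigl(|r'|^2+|Ar|^2\bigr)\le Cr_0^4\limsup\bigl(|r'|^2+|Ar|^2\bigr)$, and one then has to \emph{observe} that this forces the $\limsup$ to be zero when $Cr_0^4<1$. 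Without that observation, your argument does not close, and the claim ``$w_1$ decays exponentially like the stable eigenvalue'' does not follow from what precedes it.
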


\subsection{Proof of Theorem~\ref{thm:main} -- Conclusion}

In this section we prove Theorem~\ref{thm:main} by relying on the four propositions stated above.

To begin with, we consider the constants $\beta_{0}$ and $M_{4}$ of Proposition~\ref{prop:unstable}. Then we select $\beta:=\beta_{0}$ in Proposition~\ref{prop:stable}, and we obtain three more constants $\eta$, $\ep_{1}$, and $M_{5}$. We also consider the constant $M_{1}$ of Proposition~\ref{prop:ultimate}, and the constant $r_{0}$ of Proposition~\ref{prop:asymptotic}. We claim that the conclusions of Theorem~\ref{thm:main} hold true if we choose
$$M_{0}:=\max\{M_{4},M_{5}\},
\hspace{3em}
\ep_{0}:=\min\left\{1,\frac{\ep_{1}}{2},\frac{\eta}{2M_{1}},\frac{\eta}{2M_{0}},\frac{r_{0}}{2M_{0}},\frac{(\sigma_{0}-\beta_{0})\lambda_{1}}{2M_{0}}\right\}.$$

\paragraph{\textmd{\textit{Alternative}}}

Let us assume that (\ref{hp:main}) is satisfied, and let $u(t)$ be any solution to (\ref{eqn:duffing}). Let us set
$$L:=\limsup_{t\to +\infty}|\langle u(t),e_{1}\rangle|.$$

We observe that $L$ is finite because of (\ref{th:ultimate-u}), and we distinguish two cases.
\begin{itemize}
  \item Case $L\leq\beta_{0}$. Since $\ep_{0}\leq 1$, we can apply Proposition~\ref{prop:unstable}, from which we deduce that in this case $u(t)$ satisfies (\ref{th:main-alternative}) with $\sigma=0$.
  
  \item Case $L>\beta_{0}$. We can assume, without loss of generality, that
  \begin{equation}
  L=\limsup_{t\to +\infty}\,\langle u(t),e_{1}\rangle
  \label{hp:limsup}
  \end{equation}
  (without absolute value). Indeed, the other possibility can be treated in a symmetric way, or even reduced to this one by considering the function $-u(t)$, which is a solution to (\ref{eqn:duffing}) with external force $-f(t)$.
  
When (\ref{hp:limsup}) holds true, we claim that we are in the framework of Proposition~\ref{prop:stable} with $\beta=\beta_{0}$, namely there exists $T_{0}\geq 0$ for which the four inequalities in the upper part of (\ref{th:stable}) are satisfied. Indeed, since $\ep_{0}\leq\ep_{1}/2$, from assumption (\ref{hp:main}) we deduce that $|f(t)|\leq\ep_{1}$ when $t$ is large enough. Moreover, since $M_{1}\ep_{0}^{2}\leq M_{1}\ep_{0}\leq\eta/2$, from Proposition~\ref{prop:ultimate} we deduce that $F(t)<\eta$ for every $t$ large enough. Finally, from Lemma~\ref{lemma:limsup} applied to the function $\varphi(t):=\langle u(t),e_{1}\rangle$, we deduce that there exists a sequence $t_{n}\to+\infty$ such that 
  $$\langle u(t_{n}),e_{1}\rangle\to L>\beta_{0}
  \quad\mbox{and}\quad
  \langle u'(t_{n}),e_{1}\rangle\to 0.$$ 
  
  Therefore, all the four inequalities in the upper part of (\ref{th:stable}) are satisfied if we choose $T_{0}:=t_{n}$ with $n$ large enough. At this point, from (\ref{th:stable}) we conclude that in this case $u(t)$ satisfies (\ref{th:main-alternative}) with $\sigma=\sigma_{0}$.
 
\end{itemize} 

\paragraph{\textmd{\textit{Asymptotic convergence}}}

Since $M_{0}\ep_{0}\leq r_{0}/2$, any pair of solutions satisfying~(\ref{th:main-alternative}) with the same $\sigma\in\{-\sigma_{0},0,\sigma_{0}\}$ satisfies also (\ref{hp:asymptotic}) with the same $\sigma$. At this point, (\ref{th:main-asymptotic}) follows from Proposition~\ref{prop:asymptotic}.

\paragraph{\textmd{\textit{Basin of attraction of stable solutions}}}

Let us consider the case $\sigma=\sigma_{0}$ (but the argument is symmetric when $\sigma=-\sigma_{0}$). We claim that, under assumption (\ref{hp:main}) with our choice of $\ep_{0}$, the following characterization holds true: a solution to (\ref{eqn:duffing}) satisfies (\ref{th:main-alternative}) with $\sigma=\sigma_{0}$ if and only if there exists $T_{0}\geq 0$, possibly depending on the solution, for which the four inequalities in upper part of (\ref{th:stable}) hold true (as usual with $\beta$ equal to the value $\beta_{0}$ of Proposition~\ref{prop:unstable}).

Let us prove this characterization. The ``if part'' is exactly Proposition~\ref{prop:stable}. As for the ``only if part'', it is enough to show that (\ref{hp:main}) and (\ref{th:main-alternative}) imply that the four inequalities in the assumptions of (\ref{th:stable}) hold true when $T_{0}$ is large enough. The first one follows from (\ref{hp:main}) because $\ep_{0}\leq\ep_{1}/2$. The second one follows from Proposition~\ref{prop:ultimate} because $M_{1}\ep_{0}^{2}\leq\eta/2$, as explained before. The third one follows from  (\ref{th:main-alternative}) because  $M_{0}\ep_{0}\leq\eta/2$. Finally, from (\ref{th:main-alternative}) and our definition of $\ep_{0}$ we obtain that
$$\limsup_{t\to+\infty}|\langle u(t),e_{1}\rangle-\sigma_{0}|\leq\limsup_{t\to+\infty}\frac{1}{\lambda_{1}}|A(u(t)-\sigma_{0}e_{1})|\leq\frac{M_{0}\ep_{0}}{\lambda_{1}}\leq\frac{\sigma_{0}-\beta_{0}}{2},$$
and hence $|\langle u(t),e_{1}\rangle-\sigma_{0}|<\sigma_{0}-\beta_{0}$ when $t$ is large enough. This implies that
$$\langle u(t),e_{1}\rangle=\sigma_{0}+\left(\langle u(t),e_{1}\rangle-\sigma_{0}\strut\right)>\sigma_{0}-(\sigma_{0}-\beta_{0})=\beta_{0}$$
when $t$ is large enough, from which the last required inequality follows.

Given the characterization, we can prove our conclusions. Indeed, due to the continuous dependence on initial data, the set of initial data $(u_{0},u_{1})$ originating a solution $u(t)$ for which there required $T_{0}$ exists is an open set. In order to prove that it is nonempty, we choose $T_{0}\geq 0$ such that $|f(t)|\leq 2\ep_{0}\leq\ep_{1}$ for every $t\geq T_{0}$, and we consider the solution $u(t)$ to (\ref{eqn:duffing}) with ``initial'' data
$$u(T_{0})=\sigma_{0}e_{1},
\qquad\quad
u'(T_{0})=0.$$

This solution is defined on the whole half-line $t\geq 0$ because of Remark~\ref{rmk:backward}, and it fits in the assumptions of Proposition~\ref{prop:stable}. Indeed, among the four inequalities in the upper part of (\ref{th:stable}), the only nontrivial is that $F(T_{0})<\eta$, and this is true because 
$$F(T_{0})=\frac{\lambda_{1}^{2}}{2}\sigma_{0}^{2}-\frac{\lambda\lambda_{1}}{2}\sigma_{0}^{2}+\frac{\lambda_{1}^{2}}{4}\sigma_{0}^{4}+\frac{\gamma_{0}}{6}\sigma_{0}^{2}=\left(-\frac{\lambda_{1}(\lambda-\lambda_{1})}{4}+\frac{\gamma_{0}}{6}\right)\frac{\lambda-\lambda_{1}}{\lambda_{1}}$$
is negative when $\gamma_{0}$ in chosen according to (\ref{defn:gamma0}).

\paragraph{\textmd{\textit{Stable manifold of unstable solution}}}

Due to the alternative of statement~(1), the set of initial data originating a solution satisfying (\ref{th:main-alternative}) with $\sigma=0$ is the complement of the basins of attraction of the stable solutions. Since the two basins of attraction are open, this set is necessarily closed, and nonempty because the phase space $\dath$ is connected and cannot be represented as the union of two disjoint open sets.\qed 

\subsection{Proof of Corollary~\ref{3sol}} 

\paragraph{\textmd{\textit{Existence of three special solutions}}}

The argument is a quite standard perturbation method, and parallels the proof of the scalar case (see~\cite{haraux:duffing}). The basic idea is to linearize around the three fixed points. Let us introduce for convenience $V: = D(A) $  endowed with the natural norm. We deal with  an equation of the form
\begin{equation}\label{perturb}
w''+w'+Lw=f(t)+g(w),
\end{equation}
where $L\in {\cal {L}}(V, V') $ is the linear operator defined by
\begin{equation}
Lw:=\left\{
\begin{array}{ll}
 A^{2}w-\lambda Aw & \mbox{around the trivial equilibrium},   \\[1ex]
 A^{2}w-\lambda_1 Aw + 2\lambda_1(\lambda - \lambda_1 )\langle e_1, w\rangle e_1 & \mbox{around }\pm\sigma_{0}e_{1},   
 \end{array}
\right.
\nonumber
\end{equation}
and $g\in C(V, H) $ is a nonlinear locally Lipschitz continuous map with 
$$ |g(u) - g(v)| \le G (\|u\|_V + \|v\|_V)\|u-v\|_V$$  
for some constant $G>0$ whenever $\|u\|_V + \|v\|_V$ is small enough. We introduce the Banach spaces 
$$ X = C_b (\re, V) \cap C^1_b (\re, H)\cap C^2_b (\re, V') ; \quad  Y = C_b (\re, H).$$ 

We observe that when 
$$ Lw = A^{2}w -\lambda_1 Aw + 2\lambda_1(\lambda - \lambda_1 )\langle e_1, w\rangle e_1 , $$  
on ${e_1}^\perp$  we have 
$ L = A^{2}-\lambda_1 A $,  and on $\re e_1$ we have $L = 2\lambda_1(\lambda - \lambda_1 )I$.  Clearly $L$ is self-adjoint and coercive with domain $D(L) = D(A^2)$. 
On the other hand when 
$$ L = A^{2}-\lambda A , $$ 
since $\lambda_1< \lambda< \lambda_2$ there is  just one negative eigenvalue and we can use exponential dichotomy. Introducing 
$$ \forall w \in X, \quad \Lambda w= w''+w'+Lw $$ 
it is now standard that for all $h\in Y$, there is exactly one function $w\in X$ such that $\Lambda w = h $  and moreover $ K: = {\Lambda}^{-1}\in {\cal {L}}(Y, X). $  Now equation \eqref{perturb} can be written in the form 
$$ w = K(f + g(w))$$  
which suggests to study the dependence of $w\in X$ in terms of $v$ in the equation \begin{equation}
w = K(f + g(v))=: F(f, v)
\nonumber
\end{equation} 

It is readily verified that for $f$ fixed, $F$ is a contraction with Lipschitz norm less than $1/2$ when $w$ lies in a sufficiently small closed ball  $ B(0, r)$ centered at $0$ in $X$. Moreover, if $f$ is small enough in $Y$, the inequality 
$$ \|F(f, v)\|_X \le 1/2 \|v\|_X + \| K\|_{{\cal {L}}(Y, X)} \|f\|_Y $$ 
shows that the map $ v\longrightarrow F(f, v)$ leaves the ball $ B(0, r)$ invariant. The standard fixed point theorem concludes the proof, including the uniqueness statement and the Lipschitz dependence of the solution in $X$ in terms of $f\in Y$. We leave the remaining details to the reader.

\paragraph{\textmd{\textit{Asymptotic convergence}}}

This follows from statement~(2) of Theorem~\ref{thm:main}, provided that $\ep_{0}$ and $r_{0}$ are small enough.

\paragraph{\textmd{\textit{Periodic case}}}

Uniqueness follows from statement~(2) of Theorem~\ref{thm:main} (two periodic functions that are asymptotic to each other coincide necessarily). The existence of three periodic solutions follows from the fact that if $f$ is $T$-periodic and $u$ is the special bounded  solution, then $u(t+T)$ is a solution for the same $f$ located in the same small ball of $X$, therefore it must coincide with $u$.

\paragraph{\textmd{\textit{Almost periodic case}}}

When $f$ is almost periodic, both  almost periodicity of the three special solutions, and the module containment property, follow in a standard way from the fact that  the three special solutions depend on $f$ in a Lipschitz continuous way. 
In alternative, one could rely on the second almost periodicity criterion (double sequence characterization) of Bochner (see~\cite[Theorem~1]{Bochner}), but this requires to know that bounded solutions have precompact range in the energy space, which does not seem easy to obtain.

As for uniqueness, it follows again from statement~(2) of Theorem~\ref{thm:main}. Indeed, two almost periodic functions that are asymptotic to each other coincide necessarily, and it is sufficient for that to assume almost periodicity of the component $u$ instead of the vector $(u, u')$, with values in an arbitrarily large Banach space in which either $V$ or even $H$ is continuously imbedded.\qed

\setcounter{equation}{0}
\section{Proof of auxiliary results}\label{sec:proof-prop}

\subsection{Some useful ultimate bounds }
In this subsection we state and prove some simple properties that are going to be useful in the proof of our main result.

\begin{lemma}\label{lemma:limsup}

For every bounded function $\varphi:[0,+\infty)\to\re$ of class $C^{1}$ there exists a sequence $t_{n}\to +\infty$ of nonnegative real numbers such that
$$\lim_{n\to+\infty}\varphi'(t_{n})=0
\quad\quad\mbox{and}\quad\quad
\lim_{n\to+\infty}\varphi(t_{n})=\limsup_{t\to+\infty}\varphi(t).$$

\end{lemma}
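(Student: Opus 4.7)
The plan is to argue by contradiction, exploiting the fact that a bounded $C^{1}$ function whose derivative stays uniformly bounded away from zero on an open set cannot have two interior crossings of the level $L-\ep$ bounding a component on which the function exceeds $L-\ep$.

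First I would set $L:=\limsup_{t\to+\infty}\varphi(t)$, which is finite by boundedness, and assume by contradiction that the conclusion fails. Unpacking this negation produces $\ep>0$ and $T\geq 0$ such that every $t\geq T$ with $\varphi(t)>L-\ep$ satisfies $|\varphi'(t)|\geq\ep$. I would then introduce the relatively open set
$$U:=\{t\geq T:\varphi(t)>L-\ep\},$$
and study its connected components, each of which is an interval.

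On any such component, $\varphi'$ is continuous with $|\varphi'|\geq\ep$, hence of constant sign, so $\varphi$ is strictly monotone there with a growth rate of modulus at least $\ep$. If a component $(a,b)$ is bounded and satisfies $a>T$, continuity forces $\varphi(a)=\varphi(b)=L-\ep$, which is incompatible with strict monotonicity. Therefore $U$ admits at most one bounded component (necessarily attached to the left endpoint $T$) and at most one unbounded component of the form $(a,+\infty)$.

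To finish, I would observe that $U$ must be unbounded, for otherwise $\varphi(t)\leq L-\ep$ for all sufficiently large $t$, contradicting the definition of $L$. Hence an unbounded component $(a,+\infty)$ exists, and on it $\varphi'$ has constant sign with $|\varphi'|\geq\ep$; this forces $\varphi(t)\to +\infty$ or $\varphi(t)\to -\infty$, contradicting boundedness. No step is genuinely hard; the only slightly delicate point is the bookkeeping at the endpoints $a=T$ and $a=+\infty$, to make sure no component escapes the dichotomy between ``bounded with $\varphi$ returning to $L-\ep$'' (ruled out by monotonicity) and ``unbounded'' (ruled out by boundedness of $\varphi$).
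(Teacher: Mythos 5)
Your argument is correct in outline and takes a genuinely different route from the paper, but there is one gap to flag.

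The delicate point is the ``unpacking'' of the negation. The negation of the conclusion produces $\ep>0$ and $T\geq 0$ such that for every $t\geq T$ the conjunction ``$|\varphi'(t)|<\ep$ and $|\varphi(t)-L|<\ep$'' fails; equivalently, for $t\geq T$ the \emph{two-sided} condition $L-\ep<\varphi(t)<L+\ep$ implies $|\varphi'(t)|\geq\ep$. You replace the two-sided window by the one-sided condition $\varphi(t)>L-\ep$. That is a strictly stronger statement and does not follow from the negation alone, and it is not merely cosmetic: with the two-sided window a bounded component $(a,b)$ of the open set could perfectly well have $\varphi(a)=L-\ep$ and $\varphi(b)=L+\ep$, which is compatible with strict monotonicity, so your ``incompatible with strict monotonicity'' step would fail. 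The fix is to invoke the definition of $L=\limsup_{t\to+\infty}\varphi(t)$ once more and enlarge $T$ so that $\varphi(t)<L+\ep$ for all $t\geq T$; after that the one-sided and two-sided conditions coincide on $[T,+\infty)$, the boundary points of components not touching $T$ sit exactly at the level $L-\ep$, and the rest of your argument (bounded components killed by strict monotonicity with slope at least $\ep$, unbounded components killed by boundedness of $\varphi$, and the final observation that a bounded $U$ contradicts $\limsup\varphi=L$) goes through as written.

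As for the comparison: the paper proves the lemma directly, splitting into the cases $\liminf=\limsup$ and $\liminf<\limsup$. In the first case it applies the mean value theorem on each interval $[n,n+1]$; in the second it builds interlaced sequences $x_{n}<y_{n}<x_{n+1}$ with $\varphi(x_{n})\leq(L+\ell)/2$ and $\varphi(y_{n})\geq L-1/n$, and takes $t_{n}$ to be a maximum point of $\varphi$ on $[x_{n},x_{n+1}]$, which is interior for $n$ large, so in fact $\varphi'(t_{n})=0$. Your proof trades that explicit construction for a contradiction based on the topology of the superlevel set and a sign/slope dichotomy on its components. The paper's proof is constructive and avoids negation bookkeeping; yours is a clean one-shot contradiction once $T$ has been chosen large enough to make the one-sided window legitimate.
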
  

\begin{proof}  Let us set
$$\ell:=\liminf_{t\to +\infty}\varphi(t),
\qquad\qquad
L:=\limsup_{t\to +\infty}\varphi(t).$$

If $\ell=L$, then $\varphi(t)$ has a finite limit as $t\to +\infty$. Due to the mean value theorem, for every positive integer $n$ there exists $t_{n}\in(n,n+1)$ such that
$$\varphi(n+1)-\varphi(n)=\varphi'(t_{n}).$$

Since the left-hand side tends to 0, the sequence $t_{n}$ settles the matter in this case.

If $\ell<L$, then there exist two sequences $x_{n}\to+\infty$ and $y_{n}\to+\infty$ such that
\begin{equation}
x_{n}<y_{n}<x_{n+1},
\qquad
\varphi(x_{n})\leq\frac{L+\ell}{2},
\qquad
\varphi(y_{n})\geq L-\frac{1}{n}
\label{eqn:xn-yn}
\end{equation}
for every positive integer $n$. Let $t_{n}$ denote one of the maximum points (there might be infinitely many of them) of $\varphi(t)$ in $[x_{n},x_{n+1}]$ . Since $y_{n}\in [x_{n},x_{n+1}]$, clearly $\varphi(t_{n})\geq\varphi(y_{n})$, and from the last inequality in (\ref{eqn:xn-yn}) it follows that $\varphi(t_{n})\to L$, as required. Keeping the first condition in (\ref{eqn:xn-yn}) into account, we know that $t_{n}$ is not one of the endpoints of the interval $[x_{n},x_{n+1}]$ when $n$ is large enough, therefore $\varphi'(t_{n})=0$ when $n$ is large enough.\end{proof}

\begin{lemma}\label{lemma:ODE-limsup}

Let $m$ be a positive real number, let $\psi:[0,+\infty)\to\re$ be a continuous function, and let $y\in C^2([0,+\infty), \re) $ be a solution to
\begin{equation}
y''(t)+y'(t)-my(t)=\psi(t).
\label{hp:ODE-limsup}
\end{equation}

Let us assume that both $\psi(t)$ and $y(t)$ are bounded.

Then it turns out that
\begin{equation}
\limsup_{t\to+\infty}|y(t)|\leq\frac{1}{m}\limsup_{t\to+\infty}|\psi(t)|,
\label{th:lemma-ODE-u}
\end{equation}
\begin{equation}
\limsup_{t\to+\infty}|y'(t)|\leq 2\limsup_{t\to+\infty}|\psi(t)|.
\label{th:lemma-ODE-u'}
\end{equation}

\end{lemma}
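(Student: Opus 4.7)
\medskip
\noindent\textbf{Proof proposal for Lemma~\ref{lemma:ODE-limsup}.}

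The plan is to factor the second order operator into two commuting first order operators and then exploit the boundedness of $y$ together with the sign of each real root. The characteristic polynomial $r^{2}+r-m=0$ has two real roots of opposite sign, which I shall write as $\alpha>0$ and $-\beta$ with $\beta>0$, so that
$$\alpha+\beta=\sqrt{1+4m}>1,\qquad \alpha\beta=m,\qquad \beta-\alpha=1.$$
The differential operator $D^{2}+D-m$ then factors as $(D-\alpha)(D+\beta)=(D+\beta)(D-\alpha)$, and this is the key of the whole argument.

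First I would introduce the \emph{stable combination} $w:=y'-\alpha y$. A direct computation using the factorization shows that $w$ satisfies $w'+\beta w=\psi$, which is stable since $\beta>0$. Solving by the usual variation-of-constants formula and using boundedness of $\psi$ yields at once that $w$ is bounded on $[0,+\infty)$ together with the ultimate bound
$$\limsup_{t\to+\infty}|w(t)|\leq\frac{1}{\beta}\limsup_{t\to+\infty}|\psi(t)|.$$
Since $y$ is bounded by assumption, it follows that $y'=w+\alpha y$ is bounded as well.

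Next I would introduce the \emph{unstable combination} $z:=y'+\beta y$, which by the other factorization satisfies $z'-\alpha z=\psi$. This is the delicate step: because $\alpha>0$ the general solution contains an exponentially growing mode $Ce^{\alpha t}$, and this is precisely where the boundedness hypothesis on $y$ plays its role. Since both $y$ and $y'$ are bounded, so is $z$, which forces the constant $C$ to vanish. Therefore $z$ must coincide with the unique bounded solution given by the backward convolution
$$z(t)=-\int_{t}^{+\infty}e^{\alpha(t-s)}\psi(s)\,ds,$$
from which
$$\limsup_{t\to+\infty}|z(t)|\leq\frac{1}{\alpha}\limsup_{t\to+\infty}|\psi(t)|.$$

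Finally, solving the linear system for $y$ and $y'$ in terms of $w$ and $z$ gives
$$y=\frac{z-w}{\alpha+\beta},\qquad y'=\frac{\alpha z+\beta w}{\alpha+\beta}.$$
Taking $\limsup$ and using the two bounds above,
$$\limsup_{t\to+\infty}|y(t)|\leq\frac{1}{\alpha+\beta}\left(\frac{1}{\alpha}+\frac{1}{\beta}\right)\limsup_{t\to+\infty}|\psi(t)|=\frac{1}{\alpha\beta}\limsup_{t\to+\infty}|\psi(t)|=\frac{1}{m}\limsup_{t\to+\infty}|\psi(t)|,$$
which proves (\ref{th:lemma-ODE-u}); and
$$\limsup_{t\to+\infty}|y'(t)|\leq\frac{\alpha\cdot\alpha^{-1}+\beta\cdot\beta^{-1}}{\alpha+\beta}\limsup_{t\to+\infty}|\psi(t)|=\frac{2}{\alpha+\beta}\limsup_{t\to+\infty}|\psi(t)|\leq 2\limsup_{t\to+\infty}|\psi(t)|,$$
where in the last step I use $\alpha+\beta=\sqrt{1+4m}\geq 1$, giving (\ref{th:lemma-ODE-u'}).

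The only genuinely non-routine step is the middle one, namely the selection of the unique bounded solution to the unstable first order equation $z'-\alpha z=\psi$; everything else is a mechanical consequence of factoring the second order operator, exploiting the sign of the roots, and of the identities $\alpha+\beta=\sqrt{1+4m}$ and $\alpha\beta=m$.
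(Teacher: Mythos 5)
Your proof is correct, but it takes a genuinely different route from the paper. The paper first integrates $(e^{t}y')'=e^{t}(\psi+my)$ to show that $|y'|$ is ultimately controlled by $|\psi|+m|y|$, thereby reducing (\ref{th:lemma-ODE-u'}) to (\ref{th:lemma-ODE-u}); it then proves (\ref{th:lemma-ODE-u}) via a one-sided maximum-principle argument on the half-line, invoking Lemma~6.2 of~\cite{haraux:duffing} to produce a sequence $t_n\to+\infty$ along which $y(t_n)\to\limsup y$, $\limsup y''(t_n)\le 0$ and $y'(t_n)\to 0$, and reading the bound directly off the equation. You instead factor the constant-coefficient operator as $(D-\alpha)(D+\beta)=(D+\beta)(D-\alpha)$, split the state into the stable combination $w=y'-\alpha y$ (a forward convolution against $e^{-\beta(t-s)}$) and the unstable one $z=y'+\beta y$, and use the boundedness hypothesis to kill the growing mode of $z$ and identify it with the backward convolution $-\int_t^{\infty}e^{\alpha(t-s)}\psi(s)\,ds$; recombining then yields both estimates at once. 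The two arguments buy different things: the paper's approach is soft, does not need the explicit Green's kernel, and generalizes readily to nonautonomous or mildly nonlinear $g(y)$ in place of $-my$, at the price of appealing to an external auxiliary lemma; your approach is self-contained and entirely elementary, exploits the specific structure $y''+y'-my$, and in fact gives the slightly sharper bound $\limsup|y'|\le \frac{2}{\sqrt{1+4m}}\limsup|\psi|$, which is what the stated constant $2$ relaxes to. One small point worth making explicit when writing this up: the step ``boundedness of $z$ forces $C=0$'' relies on the convergence of $\int_0^{\infty}e^{-\alpha s}\psi(s)\,ds$, which holds because $\psi$ is bounded and $\alpha>0$; with that spelled out the argument is complete.
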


\begin{proof}

First, interpreting (\ref{hp:ODE-limsup}) as a first order equation with unknown $y'(t)$, we obtain 
\begin{equation}
y'(t)=y'(0)e^{-t}+e^{-t}\int_{0}^{t}(\psi(s)+my(s))e^{s}\,ds.
\nonumber
\end{equation}
It  follows clearly  that $y'$   is bounded  with 
\begin{equation}
\limsup_{t\to+\infty}|y'(t)|\leq \limsup_{t\to+\infty}(|\psi(t)| +m|y(t)|).
\nonumber
\end{equation}

In particular \eqref {th:lemma-ODE-u'} will be an immediate consequence of \eqref {th:lemma-ODE-u}. By changing all functions to their opposites, to prove \eqref {th:lemma-ODE-u} it is clearly enough  to establish 
\begin{equation}
\limsup_{t\to+\infty}my(t)\leq\limsup_{t\to+\infty}(-\psi(t)).
\nonumber
\end{equation} 

This inequality can be viewed as a topological variant on the half-line of the classical maximum principle in a bounded interval relying on the simple idea that a function achieving its maximum at an interior point has a zero derivative and a non-positive second derivative there.  As a consequence of Lemma 6.2 from \cite{haraux:duffing} we know the existence of a sequence of reals $t_n\geq 0$  such that $t_n\rightarrow +\infty$ and  
$$ \limsup_{n\rightarrow +\infty} y'' (t_n)\le 0,
\qquad
\lim_{n\rightarrow
\infty}y(t_n)= \limsup_{t\to+\infty}y(t)$$ 

Since by the equation $y''$ is bounded, it is immediate to see, for instance reasoning by contradiction, that 
$$\lim_{n\rightarrow
\infty}y'(t_n)= 0.$$ 

Then the result becomes an immediate consequence of the equation.
\end{proof}

\begin{lemma}\label{lemma:PDE-limsup}

Let $X$ be a Hilbert space, and let $B$ be a self-adjoint linear operator on $X$ with dense domain $D(B)$. Let us assume that there exists a constant $m>0$ such that
\begin{equation}
\langle Bx,x\rangle\geq m|x|^{2}
\quad\quad
\forall x\in D(B).
\nonumber
\end{equation}

Let $\psi:[0,+\infty)\to X$ be a bounded continuous function, and let $y:[0,+\infty)\to X$ be a solution to
\begin{equation}
y''(t)+y'(t)+By(t)=\psi(t).
\nonumber
\end{equation}

Then it turns out that
\begin{equation}
\limsup_{t\to+\infty}\left(|y'(t)|^{2}+|B^{1/2}y(t)|^{2}\right)\leq 9\max\left\{1,\frac{1}{m}\right\}\cdot\limsup_{t\to+\infty}|\psi(t)|^2.
\nonumber
\end{equation}

\end{lemma}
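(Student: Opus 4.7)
My plan is to apply the corrected-energy technique that the paper has already used in Proposition~\ref{prop:ultimate}, here simplified by linearity of the equation. Starting from the natural energy $E(t)=\tfrac{1}{2}|y'(t)|^{2}+\tfrac{1}{2}|B^{1/2}y(t)|^{2}$, I would introduce, for a parameter $\varepsilon\in(0,1)$ to be selected at the end,
\[
F(t):=E(t)+\varepsilon\langle y(t),y'(t)\rangle+\tfrac{\varepsilon}{2}|y(t)|^{2}.
\]
Completing the square rewrites this as $F=\tfrac{1-\varepsilon}{2}|y'|^{2}+\tfrac{1}{2}|B^{1/2}y|^{2}+\tfrac{\varepsilon}{2}|y+y'|^{2}$, so $F\ge 0$, and combining with the coercivity inequality $|y|^{2}\le m^{-1}|B^{1/2}y|^{2}$ yields the two-sided comparison
\[
\tfrac{1-\varepsilon}{2}\bigl(|y'|^{2}+|B^{1/2}y|^{2}\bigr)\;\le\;F(t)\;\le\;\tfrac{1+\varepsilon}{2}|y'|^{2}+\bigl(\tfrac{1}{2}+\tfrac{\varepsilon}{m}\bigr)|B^{1/2}y|^{2}.
\]

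Differentiating $F$ along the equation, using self-adjointness of $B$ and the substitution $y''=-y'-By+\psi$, the cross-terms cancel nicely and one obtains the clean identity
\[
F'(t)=-(1-\varepsilon)|y'|^{2}-\varepsilon|B^{1/2}y|^{2}+\langle y',\psi\rangle+\varepsilon\langle y,\psi\rangle.
\]
The first forcing term is handled by Young's inequality directly; for the second, I would first invoke $|y|\le m^{-1/2}|B^{1/2}y|$ before applying Young, so that the constants pick up the factor $1/m$ at the heart of the statement. Choosing the Young parameters to absorb only a prescribed fraction of the available dissipation then produces an inequality
\[
F'(t)+c\,F(t)\le C\,|\psi(t)|^{2}
\]
for explicit $c,C>0$ depending only on $\varepsilon$ and $m$.

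Passing to $\limsup$ is now routine: if $L:=\limsup_{t\to+\infty}|\psi(t)|^{2}$, then for every $\delta>0$ there exists $T_{\delta}$ with $F'+cF\le C(L+\delta)$ for $t\ge T_{\delta}$, hence $F(t)\le C(L+\delta)/c+[F(T_{\delta})-C(L+\delta)/c]\,e^{-c(t-T_{\delta})}$, and letting first $t\to+\infty$ and then $\delta\to 0$ gives $\limsup_{t\to+\infty}F(t)\le CL/c$. The lower bound from the first display converts this into
\[
\limsup_{t\to+\infty}\bigl(|y'(t)|^{2}+|B^{1/2}y(t)|^{2}\bigr)\le\frac{2C}{c(1-\varepsilon)}\,L,
\]
and it only remains to pick $\varepsilon$, as a function of $m$, so that $2C/(c(1-\varepsilon))\le 9\max\{1,1/m\}$.

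The only delicate point is precisely this last one: the structural argument produces an inequality of the right shape with essentially no room for error, but matching the numerical constant $9\max\{1,1/m\}$ forces one to pick $\varepsilon$ of order one when $m\ge 1$ and $\varepsilon$ comparable to $m$ when $m<1$, and then to verify the arithmetic in each of the two regimes. Everything else is standard corrected-energy bookkeeping, and no new idea beyond those already appearing elsewhere in the paper is needed.
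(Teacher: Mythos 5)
The paper does not prove this lemma by hand at all: its entire proof is a one-line citation of formula (3.6) in the Aloui--Haraux paper (reference~\cite{Aloui-haraux}), which is precisely a sharp ultimate-bound result for abstract linear second-order equations with coercive self-adjoint principal part and bounded forcing. Your proposal is therefore a genuinely different route: instead of appealing to that external result, you re-derive the ultimate bound from scratch via the corrected-energy method, i.e.\ by introducing $F=E+\varepsilon\langle y,y'\rangle+\tfrac{\varepsilon}{2}|y|^2$, checking that $F$ is equivalent to $|y'|^2+|B^{1/2}y|^2$ (with $m$-dependent constants via coercivity), computing the identity $F'=-(1-\varepsilon)|y'|^2-\varepsilon|B^{1/2}y|^2+\langle y',\psi\rangle+\varepsilon\langle y,\psi\rangle$, absorbing the source by Young, and closing a differential inequality $F'+cF\le C|\psi|^2$. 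Structurally this is sound, and it is the same machinery the paper uses elsewhere for its nonlinear energies, so it has the advantage of making section~3 self-contained; the price is that you must actually verify that the tuning of $(\varepsilon,c)$ and the Young parameters produces a coefficient no larger than $9\max\{1,1/m\}$, which is exactly the piece you defer. Two remarks on that deferred piece. First, your intuition that $\varepsilon$ should be taken of order one for $m\ge 1$ and of order $m$ for $m<1$ is correct, and with that choice one also needs $c$ of order $m$ in the small-$m$ regime in order for the constraint $\varepsilon-\beta\ge c\bigl(\tfrac12+\tfrac{\varepsilon}{m}\bigr)$ to be satisfiable; a back-of-envelope optimization then gives $mK\lesssim 4$ as $m\to 0$, $K\lesssim 3$ as $m\to\infty$, and $K\approx 8$ near $m=1$, all comfortably below $9\max\{1,1/m\}$. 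So, contrary to your remark that there is ``essentially no room for error,'' there is in fact a visible margin, and the arithmetic should go through without difficulty. Second, since the paper ultimately only uses the lemma with the crude factor $\max\{1,1/m\}$ fed into unnamed constants $c_{13},c_{14},c_{20},c_{21}$, the precise value $9$ is not load-bearing downstream, so if you prefer a cleaner writeup you could prove the lemma with a larger explicit constant and nothing in the paper would change; alternatively, simply cite Aloui--Haraux as the authors do.
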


\begin{proof} This is an immediate consequence of \cite[formula~(3.6)]{Aloui-haraux}. 
\end{proof}

\subsection{General notation and energies}

Let $H_{+}$ denote the subspace of $H$ orthogonal to $e_{1}$. Let us write $u(t)$ in the form
$$u(t)=u_{-}(t)e_{1}+u_{+}(t),$$
where
$u_{-}(t)=\langle u(t),e_{1}\rangle$ is the component of $u(t)$ with respect to $e_{1}$, and $u_{+}(t)$ is the orthogonal projection of $u(t)$ in $H_{+}$. Similarly, let us write the forcing term in the form $f(t)=f_{-}(t)e_{1}+f_{+}(t)$.

In this setting, equation (\ref{eqn:duffing}) is equivalent to the system (for the sake of shortness, we do not write the explicit dependence on $t$ in the left-hand sides)
\begin{equation}
u_{-}''+u_{-}'-\lambda_{1}(\lambda-\lambda_{1})u_{-}+\lambda_{1}^{2}(u_{-})^{3}+\lambda_{1}\left|A^{1/2}u_{+}\right|^{2}u_{-}=f_{-}(t),
\label{eqn:u-}
\end{equation}
\begin{equation}
u_{+}''+u_{+}'+A^{2}u_{+}-\lambda Au_{+}+\left|A^{1/2}u_{+}\right|^{2}Au_{+}+\lambda_{1}(u_{-})^{2}Au_{+}=f_{+}(t),
\label{eqn:u+}
\end{equation}
where the first one is a scalar equation, and the second one is an equation in $H_{+}$.

The energy $E(t)$ defined in (\ref{defn:E-classical}) can be decomposed as
\begin{equation}
E(t)=E_{-}(t)+E_{+}(t)+I(t),
\nonumber
\end{equation}
where
\begin{equation}
E_{-}(t):=\frac{1}{2}|u_{-}'(t)|^{2}-\frac{\lambda_{1}(\lambda-\lambda_{1})}{2}|u_{-}(t)|^{2}+\frac{\lambda_{1}^{2}}{4}|u_{-}(t)|^{4},
\label{defn:E-}
\end{equation}
\begin{equation}
E_{+}(t):=\frac{1}{2}|u_{+}'(t)|^{2}+\frac{1}{2}|Au_{+}(t)|^{2}-\frac{\lambda}{2}|A^{1/2}u_{+}(t)|^{2}+\frac{1}{4}|A^{1/2}u_{+}(t)|^{4},
\label{defn:E+}
\end{equation}
\begin{equation}
I(t):=\frac{\lambda_{1}}{2}|u_{-}(t)|^{2}\cdot|A^{1/2}u_{+}(t)|^{2}.
\label{defn:I}
\end{equation}

We can interpret $E_{-}(t)$ and $E_{+}(t)$ as the contributions of $u_{-}(t)$ and $u_{+}(t)$ to the total energy $E(t)$, with $I(t)$ representing some sort of interaction term between the components.

Analogously, the energy $F(t)$ defined in (\ref{defn:F}) can be decomposed as
\begin{equation}
F(t)=F_{-}(t)+F_{+}(t)+I(t),
\label{decomp:F}
\end{equation}
where $I(t)$ is the same as above, and
\begin{equation}
F_{-}(t):=E_{-}(t)+\frac{\gamma_{0}}{3}u_{-}(t)\cdot u_{-}'(t)+\frac{\gamma_{0}}{6}|u_{-}(t)|^{2},
\label{defn:F-}
\end{equation}
\begin{equation}
F_{+}(t):=E_{+}(t)+2\gamma_{0}\langle u_{+}(t),u_{+}'(t)\rangle+\gamma_{0}|u_{+}(t)|^{2}.
\label{defn:F+}
\end{equation}

\paragraph{\textmd{\textit{Estimates on the energy of the first component}}}

We show that, for every solution $u(t)$ to (\ref{eqn:duffing}), and every $t\geq 0$, it turns out that
\begin{equation}
F_{-}(t)\geq\frac{\lambda_{1}^{2}}{4}|u_{-}(t)|^{4}-\frac{\lambda_{1}(\lambda-\lambda_{1})}{2}|u_{-}(t)|^{2},
\label{est:F-equiv}
\end{equation}
and
\begin{equation}
F_{-}'(t)\leq-\gamma_{0}F_{-}(t)+2|f_{-}(t)|^{2}-\lambda_{1}|A^{1/2}u_{+}(t)|^{2}\cdot u_{-}(t)\cdot u_{-}'(t).
\label{est:F-'}
\end{equation}

To begin with, from the definition of $\gamma_{0}$ we find that
$$\frac{\gamma_{0}}{3}\left|u_{-}(t)\cdot u_{-}'(t)\strut\right|\leq\frac{\gamma_{0}}{6}|u_{-}'(t)|^{2}+\frac{\gamma_{0}}{6}|u_{-}(t)|^{2}\leq\frac{1}{2}|u_{-}'(t)|^{2}+\frac{\gamma_{0}}{6}|u_{-}(t)|^{2}.$$
Plugging this inequality into (\ref{defn:F-}) we obtain (\ref{est:F-equiv}). In order to prove (\ref{est:F-'}), with some algebra we write the time-derivative of $F_{-}(t)$ in the form
\begin{eqnarray}
F_{-}'(t) & = & -\gamma_{0}F_{-}(t)-\left(1-\frac{5\gamma_{0}}{6}\right)|u_{-}'(t)|^{2}-\frac{\gamma_{0}}{6}\lambda_{1}(\lambda-\lambda_{1})|u_{-}(t)|^{2}-\frac{\gamma_{0}}{12}\lambda_{1}^{2}|u_{-}(t)|^{4} 
\nonumber  \\
 &  & \mbox{}-\lambda_{1}|A^{1/2}u_{+}(t)|^{2}\cdot u_{-}(t)\cdot u_{-}'(t)-\frac{2\gamma_{0}}{3}I(t)+\frac{\gamma_{0}^{2}}{6}|u_{-}(t)|^{2}  
 \nonumber  \\
 &  &   \mbox{}+\frac{\gamma_{0}^{2}}{3}u_{-}(t)\cdot u_{-}'(t)+\frac{\gamma_{0}}{3}u_{-}(t)\cdot f_{-}(t)+u_{-}'(t)\cdot f_{-}(t).
 \label{est:F-'-step1}
\end{eqnarray}

The terms in the last line can be estimated as follows
\begin{eqnarray*}
 & \displaystyle\frac{\gamma_{0}^{2}}{3}u_{-}(t)\cdot u_{-}'(t)\leq\frac{\gamma_{0}^{2}}{6}|u_{-}(t)|^{2}+\frac{\gamma_{0}^{2}}{6}|u_{-}'(t)|^{2}, & \\[1ex]
 & \displaystyle\frac{\gamma_{0}}{3}u_{-}(t)\cdot f_{-}(t)\leq\frac{\gamma_{0}^{2}}{36}|u_{-}(t)|^{2}+|f_{-}(t)|^{2},  & \\[1ex]
 & \displaystyle u_{-}'(t)\cdot f_{-}(t)\leq\frac{1}{4}|u_{-}'(t)|^{2}+|f_{-}(t)|^{2}. &
\end{eqnarray*}

Plugging all these estimates into (\ref{est:F-'-step1}) if follows that
\begin{eqnarray}
F_{-}'(t) & \leq & -\gamma_{0}F_{-}(t)-\left(\frac{3}{4}-\frac{5\gamma_{0}}{6}-\frac{\gamma_{0}^{2}}{6}\right)|u_{-}'(t)|^{2}-\frac{\gamma_{0}}{6}\lambda_{1}(\lambda-\lambda_{1})|u_{-}(t)|^{2} 
\nonumber   \\
\noalign{\vspace{1ex}}
 &  & \mbox{}+\frac{13\gamma_{0}^{2}}{36}|u_{-}(t)|^{2}+2|f_{-}(t)|^{2}-\lambda_{1}|A^{1/2}u_{+}(t)|^{2}\cdot u_{-}(t)\cdot u_{-}'(t). 
 \label{est:F-'-interm}
\end{eqnarray}

Finally, our choice of $\gamma_{0}$ guarantees that
$$\frac{3}{4}-\frac{5\gamma_{0}}{6}-\frac{\gamma_{0}^{2}}{6}\geq 0,
\hspace{4em}
-\frac{\gamma_{0}}{6}\lambda_{1}(\lambda-\lambda_{1})+\frac{13\gamma_{0}^{2}}{36}\leq 0,$$
and therefore (\ref{est:F-'-interm}) implies (\ref{est:F-'}).

\paragraph{\textmd{\textit{Estimates on the energy of high frequencies}}}

We show that, for every solution $u_{+}(t)$ to (\ref{eqn:u+}), and every $t\geq 0$, it turns out that
\begin{equation}
F_{+}(t)\geq\min\left\{\frac{1}{4},\frac{\lambda_{2}-\lambda}{2\lambda_{2}}\right\}\left(|u_{+}'(t)|^{2}+|Au_{+}(t)|^{2}\strut\right),
\label{est:F+equiv}
\end{equation}
and
\begin{eqnarray}
F_{+}'(t) & \leq  &  -\gamma_{0}F_{+}(t)-\frac{1}{4}|u_{+}'(t)|^{2}-\frac{\lambda_{2}-\lambda}{\lambda_{2}}\gamma_{0}|Au_{+}(t)|^{2} +2|f_{+}(t)|^{2}
\nonumber \\
\noalign{\vspace{1ex}}
 &   &  \mbox{}-\lambda_{1}\langle Au_{+}(t),u_{+}'(t)\rangle\cdot|u_{-}(t)|^{2}-4\gamma_{0}I(t).
\label{est:F+'}
\end{eqnarray}

In order to prove (\ref{est:F+equiv}), we observe that
\begin{equation}
|2\gamma_{0}\langle u_{+}(t),u_{+}'(t)\rangle|\leq\gamma_{0}|u_{+}'(t)|^{2}+\gamma_{0}|u_{+}(t)|^{2},
\label{est:mixt-F+}
\end{equation}
and we exploit the coerciveness of $A$ in $H_{+}$ in order to deduce that
\begin{equation}
|Au_{+}(t)|^{2}-\lambda|A^{1/2}u_{+}(t)|^{2}\geq\frac{\lambda_{2}-\lambda}{\lambda_{2}}|Au_{+}(t)|^{2}.
\label{est:F+H+}
\end{equation}

Plugging these estimates into (\ref{defn:E+}) and (\ref{defn:F+}) we conclude that
$$F_{+}(t)\geq\left(\frac{1}{2}-\gamma_{0}\right)|u_{+}'(t)|^{2}+\frac{\lambda_{2}-\lambda}{2\lambda_{2}}|Au_{+}(t)|^{2},$$
which implies (\ref{est:F+equiv}) because $\gamma_{0}\leq 1/4$.

In order to prove (\ref{est:F+'}), with some algebra we write the time-derivative of $F_{+}(t)$ in the form
\begin{eqnarray}
F_{+}'(t) & = & -\gamma_{0}F_{+}(t)-\left(1-\frac{5\gamma_{0}}{2}\right)|u_{+}'(t)|^{2}-\frac{3\gamma_{0}}{2}\left(|Au_{+}(t)|^{2}-\lambda|A^{1/2}u_{+}(t)|^{2}\strut\right) 
\nonumber  \\
\noalign{\vspace{1ex}}
 &  & \mbox{}-\frac{7\gamma_{0}}{4}|A^{1/2}u_{+}(t)|^{4}-\lambda_{1}\langle Au_{+}(t),u_{+}'(t)\rangle\cdot|u_{-}(t)|^{2}-4\gamma_{0}I(t)+\gamma_{0}^{2}|u_{+}(t)|^{2} 
\nonumber   \\
\noalign{\vspace{1ex}}
 &  &   \mbox{}+2\gamma_{0}^{2}\langle u_{+}(t),u_{+}'(t)\rangle+2\gamma_{0}\langle u_{+}(t),f_{+}(t)\rangle+\langle u_{+}'(t),f_{+}(t)\rangle.
 \label{est:F+'-step1}
\end{eqnarray}

The first term in the last line can be estimated as in (\ref{est:mixt-F+}). The second and third term can be estimated as follows
$$2\gamma_{0}\langle u_{+}(t),f_{+}(t)\rangle\leq\gamma_{0}^{2}|u_{+}(t)|^{2}+|f_{+}(t)|^{2},$$
$$\langle u_{+}'(t),f_{+}(t)\rangle\leq\frac{1}{4}|u_{+}'(t)|^{2}+|f_{+}(t)|^{2}.$$

Finally, from the coercivity of $A$ in $H_{+}$ we obtain both (\ref{est:F+H+}) and
$$|u_{+}(t)|^{2}\leq\frac{1}{\lambda_{2}^{2}}|Au_{+}(t)|^{2}.$$

Plugging all these estimates into (\ref{est:F+'-step1}) we deduce that
\begin{eqnarray}
F_{+}'(t) & \leq & -\gamma_{0}F_{+}(t)-\left(\frac{3}{4}-\frac{5\gamma_{0}}{2}-\gamma_{0}^{2}\right)|u_{+}'(t)|^{2}-\left(\frac{3}{2}\frac{\lambda_{2}-\lambda}{\lambda_{2}}-\frac{3\gamma_{0}}{\lambda_{2}^{2}}\right)\gamma_{0}|Au_{+}(t)|^{2} 
\nonumber   \\
\noalign{\vspace{1ex}}
 &  & \mbox{}+2|f_{+}(t)|^{2}-\lambda_{1}\langle Au_{+}(t),u_{+}'(t)\rangle\cdot|u_{-}(t)|^{2}-4\gamma_{0}I(t). 
 \label{est:F+'-interm}
\end{eqnarray}

Finally, our choice (\ref{defn:gamma0}) of $\gamma_{0}$ guarantees that
$$\frac{3}{4}-\frac{5\gamma_{0}}{2}-\gamma_{0}^{2}\geq\frac{1}{4},
\hspace{4em}
\frac{3}{2}\frac{\lambda_{2}-\lambda}{\lambda_{2}}-\frac{3\gamma_{0}}{\lambda_{2}^{2}}\geq\frac{\lambda_{2}-\lambda}{\lambda_{2}},$$
and therefore (\ref{est:F+'-interm}) implies (\ref{est:F+'}).

\subsection{Proof of Proposition~\ref{prop:ultimate}}

From (\ref{decomp:F}) it follows that
$$F'(t)=F_{-}'(t)+F_{+}'(t)+I'(t)
\quad\quad
\forall t\geq 0.$$

Now we estimate the first two terms as in (\ref{est:F-'}) and (\ref{est:F+'}), and we observe that
\begin{equation}
I'(t)=\lambda_{1}\langle Au_{+}(t),u_{+}'(t)\rangle\cdot|u_{-}(t)|^{2}+\lambda_{1}|A^{1/2}u_{+}(t)|^{2}\cdot u_{-}(t)\cdot u_{-}'(t).
\label{eqn:I'}
\end{equation}

We deduce that
$$F'(t)\leq -\gamma_{0}F(t)+2|f(t)|^{2}
\quad\quad
\forall t\geq 0.$$

Integrating this differential inequality we obtain (\ref{th:ultimate}) with $M_{1}:=2/\gamma_{0}$.

In order to prove (\ref{th:ultimate-u}), we write $F(t)$ in the form (\ref{defn:F}), and we observe that
$$\frac{1}{4}x^{4}-\frac{\lambda}{2}x^{2}\geq-\frac{\lambda^{2}}{4}
\quad\quad
\forall x\in\re,$$
and 
$$\left|2\gamma_{0}\langle Pu(t),u'(t)\rangle\strut\right|\leq\gamma_{0}|P^{1/2}u'(t)|^{2}+\gamma_{0}|P^{1/2}u(t)|^{2}\leq\gamma_{0}|u'(t)|^{2}+\gamma_{0}|P^{1/2}u(t)|^{2}.$$

Plugging the first estimate into (\ref{defn:E}), and the second one into (\ref{defn:F}), we obtain that
$$F(t)\geq\left(\frac{1}{2}-\gamma_{0}\right)|u'(t)|^{2}+\frac{1}{2}|Au(t)|^{2}-\frac{\lambda^{2}}{4}
\quad\quad
\forall t\geq 0.$$

Since $\gamma_{0}\leq 1/4$, this proves that
$$|u'(t)|^{2}+|Au(t)|^{2}\leq 4F(t)+\lambda^{2}
\quad\quad
\forall t\geq 0.$$

At this point, (\ref{th:ultimate-u}) follows from (\ref{th:ultimate}).\qed

\subsection{Proof of Proposition~\ref{prop:unstable}}

Let us choose $\beta_{0}>0$ small enough so that
\begin{equation}
16\beta_{0}^{4}\lambda_{1}^{2}\leq\frac{\lambda_{2}-\lambda}{\lambda_{2}}\gamma_{0},
\hspace{3em}
\beta_{0}^{2}\leq\frac{\lambda-\lambda_{1}}{2\lambda_{1}}.
\label{defn:beta0}
\end{equation}

Let us write as usual $u(t)=u_{-}(t)e_{1}+u_{+}(t)$, and $f(t)=f_{-}(t)e_{1}+f_{+}(t)$. The constants $c_{1}$, \ldots, $c_{6}$ in the sequel depend only on the three parameters $\lambda$, $\lambda_{1}$, $\lambda_{2}$.

\paragraph{\textmd{\textit{Estimate on high frequencies}}}

We show that
\begin{equation}
\limsup_{t\to +\infty}\left(|u_{+}'(t)|^{2}+|Au_{+}(t)|^{2}\right)\leq c_{1}\limsup_{t\to +\infty}|f_{+}(t)|^{2}.
\label{th:unstable+}
\end{equation}

To this end, let us consider the energy $F_{+}(t)$ defined in (\ref{defn:F+}). Since we assumed that the limsup of $|u_{-}(t)|$ is less than or equal to $\beta_{0}$, there exists $t_{0}\geq 0$ such that $|u_{-}(t)|\leq 2\beta_{0}$ for every $t\geq t_{0}$. Keeping into account the first inequality in (\ref{defn:beta0}), it follows that
\begin{eqnarray*}
-\lambda_{1}|u_{-}(t)|^{2}\cdot\langle Au_{+}(t),u_{+}'(t)\rangle & \leq & \frac{1}{4}|u_{+}'(t)|^{2}+\lambda_{1}^{2}|u_{-}(t)|^{4}\cdot|Au_{+}(t)|^{2} \\
\noalign{\vspace{1ex}}
 & \leq & \frac{1}{4}|u_{+}'(t)|^{2}+16\beta_{0}^{4}\lambda_{1}^{2}\cdot|Au_{+}(t)|^{2} \\
\noalign{\vspace{1ex}}
 & \leq & \frac{1}{4}|u_{+}'(t)|^{2}+\frac{\lambda_{2}-\lambda}{\lambda_{2}}\gamma_{0}|Au_{+}(t)|^{2}
\end{eqnarray*}
for every $t\geq t_{0}$. Plugging this estimate into (\ref{est:F+'}), and recalling that $I(t)$ is nonnegative, we deduce that
$$F_{+}'(t)\leq-\gamma_{0}F_{+}(t)+2|f_{+}(t)|^{2}
\quad\quad
\forall t\geq t_{0}.$$

Integrating this differential inequality, we conclude that
$$\limsup_{t\to +\infty}F_{+}(t)\leq\frac{2}{\gamma_{0}}\limsup_{t\to +\infty}|f_{+}(t)|^{2}.$$

Due to (\ref{est:F+equiv}), this estimate implies (\ref{th:unstable+}).

\paragraph{\textmd{\textit{Estimate on the first component}}}

We show that 
\begin{equation}
\left(|u_{-}'(t)|^{2}+|Au_{-}(t)|^{2}\right)\leq c_{2}\limsup_{t\to +\infty}|f(t)|^{2}.
\label{th:unstable-}
\end{equation}

To this end, we rewrite (\ref{eqn:u-}) in the form
$$u_{-}''+u_{-}'-\lambda_{1}(\lambda-\lambda_{1})u_{-}=-\lambda_{1}^{2}(u_{-})^{3}-\lambda_{1}|A^{1/2}u_{+}|^{2} u_{-}+f_{-}(t),$$
and we interpret it as a non-homogeneous linear equation with a prescribed right-hand side. This equation fits in the framework of Lemma~\ref{lemma:ODE-limsup} with $y(t):=u_{-}(t)$, $m:=\lambda_{1}(\lambda-\lambda_{1})$, and
$$\psi(t):=-\lambda_{1}^{2}[u_{-}(t)]^{3}-\lambda_{1}|A^{1/2}u_{+}(t)|^{2}\cdot u_{-}(t)+f_{-}(t).$$

Indeed, the solution $u_{-}(t)$ is bounded because the $\limsup$ of $|u_{-}(t)|$ is bounded, and the forcing term $\psi(t)$ is bounded because $u_{-}(t)$ and $f_{-}(t)$ are bounded, and $|A^{1/2}u_{+}(t)|$ is bounded  because of (\ref{th:unstable+}) and the coercivity of $A$. Thus from conclusion (\ref{th:lemma-ODE-u}) of Lemma~\ref{lemma:ODE-limsup} it turns out that
\begin{equation}
\limsup_{t\to +\infty}|u_{-}(t)|\leq\frac{1}{\lambda_{1}(\lambda-\lambda_{1})}\limsup_{t\to +\infty}|\psi(t)|.
\label{est:limsup-u-psi}
\end{equation}

In order to estimate the right-hand side, we observe that
$$|\psi(t)| \leq \lambda_{1}^{2}|u_{-}(t)|^{2}\cdot|u_{-}(t)|+\lambda_{1}|u_{-}(t)|\cdot|A^{1/2}u_{+}(t)|^{2}+|f_{-}(t)|.$$

When we pass to the $\limsup$, we use again (\ref{th:unstable+}) and the coercivity of $A$, and we deduce that
\begin{eqnarray}
\limsup_{t\to +\infty}|\psi(t)|  &  \leq  & \lambda_{1}^{2}\beta_{0}^{2}\cdot\limsup_{t\to +\infty}|u_{-}(t)|+\lambda_{1}\beta_{0}\cdot c_{3}\limsup_{t\to +\infty}|f_{+}(t)|^{2}+\limsup_{t\to +\infty}|f_{-}(t)|  
\nonumber \\
\noalign{\vspace{1ex}}
 & \leq & \frac{\lambda_{1}(\lambda-\lambda_{1})}{2}\cdot\limsup_{t\to +\infty}|u_{-}(t)|+c_{4}\limsup_{t\to +\infty}|f(t)|,
\label{est:psi-beta0}
\end{eqnarray}
where in the last step we exploited the second inequality in (\ref{defn:beta0}), and our assumption that the $\limsup$ of $|f(t)|$ is less than or equal to~1. 

Plugging this estimate into (\ref{est:limsup-u-psi}), we conclude that
\begin{equation}
\limsup_{t\to +\infty}|u_{-}(t)|\leq c_{5}\limsup_{t\to +\infty}|f(t)|.
\label{limsup:u-}
\end{equation} 

Similarly, from conclusion (\ref{th:lemma-ODE-u'}) of Lemma~\ref{lemma:ODE-limsup} we deduce that
$$\limsup_{t\to +\infty}|u_{-}'(t)|\leq 2\limsup_{t\to +\infty}|\psi(t)|.$$

Keeping (\ref{est:psi-beta0}) and (\ref{limsup:u-}) into account, we conclude that
\begin{equation}
\limsup_{t\to +\infty}|u_{-}'(t)|\leq c_{6}\limsup_{t\to +\infty}|f(t)|.
\label{limsup:u'-}
\end{equation} 

At this point, (\ref{limsup:u-}) and (\ref{limsup:u'-}) imply (\ref{th:unstable-}), which together with (\ref{th:unstable+}) implies the conclusion in (\ref{th:unstable}).\qed

\subsection{Proof of Proposition~\ref{prop:stable}}

\paragraph{\textmd{\textit{Double well potential}}}

Let us introduce the double well potential defined by
$$W(x):=\frac{\lambda_{1}^{2}}{4}\left(x^{2}-\sigma_{0}^{2}\right)^{2}
\quad\quad
\forall x\in\re,$$
where $\sigma_{0}$ is the constant defined in (\ref{defn:sigma-0}). We observe that equation (\ref{eqn:u-}) can now be written in the form
\begin{equation}
u_{-}''+u_{-}'+W'(u_{-})+\lambda_{1}|A^{1/2}u_{+}|^{2}u_{-}=f_{-}(t),
\nonumber
\end{equation}
and that the energy $E_{-}(t)$ of the first component defined in (\ref{defn:E-}) can now be written as
$$E_{-}(t)=\frac{1}{2}|u_{-}'(t)|^{2}+W(u_{-}(t))-W(0).$$

\paragraph{\textmd{\textit{Choice of parameters}}}

Given any $\beta\in(0,\sigma_{0})$ as in the statement of the proposition, let us choose $\beta_{1}\in(0,\beta)$ such that
\begin{equation}
\delta:=W(\beta_{1})-W(0)+\frac{\gamma_{0}}{6}\beta^{2}>0,
\label{defn:delta}
\end{equation}
and let us observe that there exist three positive constants $K_{1}$, $K_{2}$, and $K_{3}$ such that
\begin{eqnarray}
& W(x)\geq W(0)+1
\quad\quad
\forall x\geq K_{1}, &
\label{defn:K1} \\[1ex]
& W(x)\leq K_{2}(x-\sigma_{0})^{2}
\quad\quad
\forall x\in[0,\sqrt{2}\,\sigma_{0}], &
\label{defn:K2} \\[1ex]
& (x-\sigma_{0})\cdot W'(x)\geq K_{3}(x-\sigma_{0})^{2}
\quad\quad
\forall x\geq\beta_{1}. 
\label{defn:K3}
\end{eqnarray}

Let $\gamma_{1}$ be a positive real number satisfying the following five inequalities
\begin{equation}
\gamma_{1}\leq\frac{1}{4},
\quad\quad
K_{2}\gamma_{1}+\frac{\gamma_{1}}{4}+\gamma_{1}^{2}\leq K_{3},
\quad\quad
\lambda_{1}\gamma_{1}\sqrt{2}\,\sigma_{0}^{2}\leq(\lambda_{2}-\lambda)\gamma_{0},
\label{defn:gamma1}
\end{equation}
\begin{equation}
\gamma_{1}^{2}\leq\gamma_{0},
\hspace{3em}
2(K_{1}+\sigma_{0})^{2}\gamma_{1}\leq\delta.
\label{defn:gamma1-bis}
\end{equation}

Let $\eta$ be a positive real number satisfying the following two inequalities
\begin{equation}
\eta\leq 1,
\hspace{3em}
\left[1+\left(\frac{\gamma_{0}}{3}+\gamma_{1}\right)(K_{1}+\sigma_{0})\right]\eta\leq\frac{\delta}{4}.
\label{defn:eta}
\end{equation}

Let $\ep_{1}$ be a positive real number such that
\begin{equation}
\ep_{1}^{2}<\frac{\delta}{4}\gamma_{1}^{2}.
\label{defn:ep1}
\end{equation}

We claim that implication (\ref{th:stable}) holds true for this choice of the parameters.

\paragraph{\textmd{\textit{Definition of energies}}}

For every solution $u(t)$ to (\ref{eqn:duffing}), let us consider the energy
\begin{equation}
R(t):=\frac{1}{2}|u_{-}'(t)|^{2}+W(u_{-}(t))+\gamma_{1}(u_{-}(t)-\sigma_{0})\cdot u_{-}'(t)+\frac{\gamma_{1}}{2}|u_{-}(t)-\sigma_{0}|^{2},
\label{defn:R}
\end{equation}
depending only on the first component, and the global energy
\begin{equation}
S(t):=R(t)+F_{+}(t)+I(t),
\label{defn:S}
\end{equation}
where $F_{+}(t)$ and $I(t)$ are defined in (\ref{defn:F+}) and (\ref{defn:I}).

We show that $F(t)$ estimates the first component $u_{-}(t)$ in the sense that
\begin{equation}
W(u_{-}(t))\leq F(t)+W(0)
\quad\quad
\forall t\geq 0,
\label{est:F-W}
\end{equation}
that $R(t)$ estimates the first component $u_{-}(t)$ in the sense that
\begin{equation}
R(t)\geq\frac{\gamma_{1}}{4}\left(|u_{-}'(t)|^{2}+|u_{-}(t)-\sigma_{0}|^{2}\right)
\quad\quad
\forall t\geq 0,
\label{est:R-u-}
\end{equation}
\begin{equation}
R(t)\geq W(u_{-}(t))
\quad\quad
\forall t\geq 0,
\label{est:R-W}
\end{equation}
and that the global energy $S(t)$ estimates the distance between the whole solution $u(t)$ and the stationary point $\sigma_{0}e_{1}$ in the sense that there exists a positive constant $c_{7}$ such that
\begin{equation}
S(t)\geq c_{7}\left(|u'(t)|^{2}+|A(u(t)-\sigma_{0}e_{1})|^{2}\right)
\quad\quad
\forall t\geq 0.
\label{est:S-energy}
\end{equation}

In order to prove (\ref{est:F-W}), we just observe that the terms $F_{+}(t)$ and $I(t)$ in the definition of $F(t)$ are nonnegative, and therefore form (\ref{est:F-equiv}) it follows that
$$F(t)\geq F_{-}(t)\geq\frac{\lambda_{1}^{2}}{4}|u_{-}(t)|^{4}-\frac{\lambda_{1}(\lambda-\lambda_{1})}{2}|u_{-}(t)|^{2}=W(u_{-}(t))-W(0).$$

In order to prove (\ref{est:R-u-}) and (\ref{est:R-W}), it is enough to plug the inequality
$$|\gamma_{1}(u_{-}(t)-\sigma_{0})\cdot u_{-}'(t)|\leq\frac{\gamma_{1}}{4}|u_{-}(t)-\sigma_{0}|^{2}+\gamma_{1}|u_{-}'(t)|^{2}$$
into (\ref{defn:R}) in order to obtain that
$$R(t)\geq\left(\frac{1}{2}-\gamma_{1}\right)|u_{-}'(t)|^{2}+\frac{\gamma_{1}}{4}|u_{-}(t)-\sigma_{0}|^{2}+W(u_{-}(t)).$$

Since $\gamma_{1}\leq 1/4$ and $W(u_{-}(t))\geq 0$, this proves both (\ref{est:R-u-}) and (\ref{est:R-W}).

Finally, in order to prove (\ref{est:S-energy}), it is enough to consider  (\ref{defn:S}) and recall that
\begin{itemize}
  \item $R(t)$ controls $u_{-}(t)-\sigma_{0}$ and its time-derivative because of (\ref{est:R-u-}),
  \item $F_{+}(t)$ controls $u_{+}(t)$ and its time-derivative because of (\ref{est:F+equiv}),
  \item $I(t)$ is nonnegative.
\end{itemize}

\paragraph{\textmd{\textit{Global energy and potential well}}}

We show that, for every $t\geq 0$, the following implication holds true:
\begin{equation}
\fbox{$S(t)\leq W(\beta_{1})$ and $u_{-}(t)\geq 0$}
\quad\Longrightarrow\quad
\fbox{$\beta_{1}\leq u_{-}(t)\leq\sqrt{2}\sigma_{0}$}
\label{th:L2->well}
\end{equation} 

Indeed, since $F_{+}(t)$ and $I(t)$ are nonnegative, from (\ref{defn:S}) and (\ref{est:R-W}) it follows that
$$W(u_{-}(t))\leq R(t)\leq S(t)\leq W(\beta_{1}).$$

When $u_{-}(t)$ is nonnegative, this inequality implies that $u_{-}(t)$ lies between $\beta_{1}$ and a positive number less than $\sqrt{2}\sigma_{0}$.

\paragraph{\textmd{\textit{Energy estimate at the initial time}}}

Let $T_{0}\geq 0$ be the time mentioned in the assumptions of (\ref{th:stable}). We show that
\begin{equation}
S(T_{0})\leq W(\beta_{1})-\frac{\delta}{2}.
\label{est:S0}
\end{equation}

To this end, we first observe that $S(t)$ and $F(t)$ are related by the equality
\begin{eqnarray*}
S(t) & = & W(0)+F(t)-\frac{\gamma_{0}}{3} u_{-}(t)u_{-}'(t)-\frac{\gamma_{0}}{6}|u_{-}(t)|^{2} \\[1ex]
 &  & \mbox{}+\gamma_{1}(u_{-}(t)-\sigma_{0})\cdot u_{-}'(t)+\frac{\gamma_{1}}{2}|u_{-}(t)-\sigma_{0}|^{2},
\end{eqnarray*}
from which it follows that
\begin{eqnarray}
S(t) & \leq & W(0)+F(t)-\frac{\gamma_{0}}{6}|u_{-}(t)|^{2}
\nonumber \\[1ex]
 &  &  \mbox{}+\left(\frac{\gamma_{0}}{3}+\gamma_{1}\right)\left(|u_{-}(t)|+\sigma_{0}\strut\right)\cdot|u_{-}'(t)|+\frac{\gamma_{1}}{2}\left(|u_{-}(t)|+\sigma_{0}\strut\right)^{2} 
\label{est:S-F}
\end{eqnarray}
for every $t\geq 0$. In order to estimate all these terms, we exploit our assumptions that $F(T_{0})<\eta$, $|u_{-}'(T_{0})|<\eta$, and $u_{-}(T_{0})>\beta$. We need also an estimate from above for $u_{-}(T_{0})$, which we deduce again from the assumption that $F(T_{0})<\eta$. Indeed, since $\eta\leq 1$, from (\ref{est:F-W}) we deduce that $W(u_{-}(T_{0}))\leq W(0)+1$. Thanks to (\ref{defn:K1}), this inequality implies that $u_{-}(T_{0})\leq K_{1}$.

Plugging all these estimates into (\ref{est:S-F}), and keeping the second smallness assumptions in (\ref{defn:gamma1-bis}) and (\ref{defn:eta}) into account, we conclude that
\begin{eqnarray*}
S(T_{0}) & \leq &  W(0)+\eta-\frac{\gamma_{0}}{6}\beta^{2}+\left(\frac{\gamma_{0}}{3}+\gamma_{1}\right)(K_{1}+\sigma_{0})\eta+\frac{\gamma_{1}}{2}(K_{1}+\sigma_{0})^{2} \\[1ex]
 & \leq  &  W(0)-\frac{\gamma_{0}}{6}\beta^{2}+\frac{\delta}{2},
\end{eqnarray*}
which is exactly (\ref{est:S0}) when $\delta$ is given by (\ref{defn:delta}).

\paragraph{\textmd{\textit{Energy estimate in the potential well}}}

We show that the time-derivatives of $R(t)$ and $S(t)$ satisfy the following two inequalities
\begin{equation}
R'(t)\leq-\gamma_{1}^{2}R(t)+2|f_{-}(t)|^{2}-\lambda_{1}|A^{1/2}u_{+}(t)|^{2}u_{-}(t)u_{-}'(t)+\frac{\lambda_{2}-\lambda}{\lambda_{2}}\gamma_{0}|Au_{+}(t)|^{2},
\label{est:R'}
\end{equation}
\begin{equation}
S'(t)\leq -\gamma_{1}^{2}S(t)+2|f(t)|^{2},
\label{est:S'}
\end{equation}
\emph{as long as the solution lies in the potential well}, more precisely for every $t\geq 0$ such that
\begin{equation}
\beta_{1}\leq u_{-}(t)\leq\sqrt{2}\sigma_{0}.
\label{hp:ALA}
\end{equation}

To this end, with some algebra we write the time-derivative of $R(t)$ in the form
\begin{eqnarray}
R'(t) & = & -\gamma_{1}^{2}R(t)-\left(1-\gamma_{1}-\frac{\gamma_{1}^{2}}{2}\right)|u_{-}'(t)|^{2}+\frac{\gamma_{1}^{3}}{2}|u_{-}(t)-\sigma_{0}|^{2}  
\nonumber \\
\noalign{\vspace{1ex}}
 &  &  \mbox{}-\gamma_{1}(u_{-}(t)-\sigma_{0})\cdot W'(u_{-}(t))+\gamma_{1}^{2}W(u_{-}(t)) 
\nonumber  \\
\noalign{\vspace{1ex}}
 &  & \mbox{}-\lambda_{1}|A^{1/2}u_{+}(t)|^{2}u_{-}(t)u_{-}'(t)-\gamma_{1}\lambda_{1}|A^{1/2}u_{+}(t)|^{2}(u_{-}(t)-\sigma_{0})u_{-}(t) 
\nonumber  \\
\noalign{\vspace{1ex}}
 &  &  \mbox{}+u_{-}'(t)f_{-}(t) +\gamma_{1}(u_{-}(t)-\sigma_{0})f_{-}(t) +\gamma_{1}^{3}u_{-}'(t)(u_{-}(t)-\sigma_{0}) .
 \label{eqn:R'}
\end{eqnarray}

The terms in the last line can be estimated as follows
\begin{eqnarray*}
 & \displaystyle|u_{-}'(t)\cdot f_{-}(t)|\leq\frac{1}{4}|u_{-}'(t)|^{2}+|f_{-}(t)|^{2}, & \\[1ex]
 & \displaystyle|\gamma_{1}(u_{-}(t)-\sigma_{0})\cdot f_{-}(t)|\leq\frac{\gamma_{1}^{2}}{4}|u_{-}(t)-\sigma_{0}|^{2}+|f_{-}(t)|^{2}, & \\[1ex]
 & \displaystyle|\gamma_{1}^{3}u_{-}'(t)\cdot (u_{-}(t)-\sigma_{0})|\leq\frac{\gamma_{1}^{3}}{2}|u_{-}'(t)|^{2}+\frac{\gamma_{1}^{3}}{2}|u_{-}(t)-\sigma_{0}|^{2}. & 
\end{eqnarray*}

Moreover, from (\ref{hp:ALA}) we deduce that $|u_{-}(t)-\sigma_{0}|\leq\sigma_{0}$, and therefore
$$\gamma_{1}\lambda_{1}|A^{1/2}u_{+}(t)|^{2}\cdot|u_{-}(t)-\sigma_{0}|\cdot|u_{-}(t)|\leq\frac{\gamma_{1}\lambda_{1}}{\lambda_{2}}|Au_{+}(t)|^{2}\cdot\sigma_{0}\cdot\sqrt{2}\sigma_{0}.$$

Finally, from (\ref{defn:K2}) and (\ref{defn:K3}) (in this point we need again (\ref{hp:ALA})) we deduce that
$$-\gamma_{1}(u_{-}(t)-\sigma_{0})\cdot W'(u_{-}(t))+\gamma_{1}^{2}W(u_{-}(t))\leq\left(-\gamma_{1}K_{3}+\gamma_{1}^{2}K_{2}\right)|u_{-}(t)-\sigma_{0}|^{2}.$$

Plugging all these estimates into (\ref{eqn:R'}) we obtain that
\begin{eqnarray*}
R'(t) & \leq & -\gamma_{1}^{2}R(t)-\left(\frac{3}{4}-\gamma_{1}-\frac{\gamma_{1}^{2}}{2}-\frac{\gamma_{1}^{3}}{2}\right)|u_{-}'(t)|^{2} \\
 &  & \mbox{}-\gamma_{1}\left(K_{3}-K_{2}\gamma_{1}-\frac{\gamma_{1}}{4}-\gamma_{1}^{2}\right)|u_{-}(t)-\sigma_{0}|^{2}  \\
 &  &  \mbox{}+2|f_{-}(t)|^{2}-\lambda_{1}|A^{1/2}u_{+}(t)|^{2}u_{-}(t)u_{-}'(t)+\frac{\gamma_{1}\lambda_{1}}{\lambda_{2}}\sqrt{2}\,\sigma_{0}^{2}|Au_{+}(t)|^{2}
\end{eqnarray*}
as long as condition (\ref{hp:ALA}) is satisfied.
At this point, (\ref{est:R'}) follows from the three smallness conditions on $\gamma_{1}$ stated in (\ref{defn:gamma1}).

Finally, since $\gamma_{1}^{2}\leq\gamma_{0}$, estimate (\ref{est:S'}) follows from (\ref{est:F+'}) and (\ref{eqn:I'}), which hold true for every $t\geq 0$, and (\ref{est:R'}), which holds true as long as the solution satisfies (\ref{hp:ALA}).

\paragraph{\textmd{\textit{Solutions remain in the potential well}}}

We show that
\begin{equation}
S(t)\leq W(\beta_{1})\mbox{ and }u_{-}(t)\geq 0 
\quad\quad
\forall t\geq T_{0}.
\label{th:u-pot-well}
\end{equation}

To this end, let us set 
$$T_{1}:=\sup\left\{t\geq T_{0}:S(\tau)< W(\beta_{1})\mbox{ and }u_{-}(\tau)> 0\mbox{ for every }\tau\in[T_{0},t]\strut\right\}.$$

We observe that $T_{1}$ is the supremum of an open set containing $t=T_{0}$, and hence it is well defined and greater than $T_{0}$, and it satisfies
\begin{equation}
S(t)\leq W(\beta_{1})\quad\mbox{and}\quad u_{-}(t)\geq 0
\quad\quad
\forall t\in[T_{0},T_{1}].
\label{ineq:0-T}
\end{equation}

If $T_{1}=+\infty$, then (\ref{th:u-pot-well}) is proved. Let us assume by contradiction that $T_{1}<+\infty$. Due to the maximality of $T_{1}$, it follows that either $S(T_{1})=W(\beta_{1})$ or  $u_{-}(T_{1})=0$. Now we show that both possibilities lead to an absurdity. 

From (\ref{ineq:0-T}) and (\ref{th:L2->well}) it follows that (\ref{hp:ALA}) holds true for every $t\in[T_{0},T_{1}]$, and hence also the differential inequality (\ref{est:S'}) is satisfied for every $t\in[T_{0},T_{1}]$. Recalling that $|f(t)|\leq\ep_{1}$ for every $t\geq T_{0}$, integrating this differential inequality we deduce that
$$S(t)\leq S(T_{0})+\frac{2\ep_{1}^{2}}{\gamma_{1}^{2}}
\qquad
\forall t\in[T_{0},T_{1}].$$

Setting $t=T_{1}$, and keeping into account (\ref{est:S0}) and the smallness condition (\ref{defn:ep1}), we conclude that $S(T_{1})<W(\beta_{1})$, which rules out the first possibility.

On the other hand, we already know from (\ref{th:L2->well}) that the two inequalities $S(T_{1})\leq W(\beta_{1})$ and $u_{-}(T_{1})\geq 0$ imply that $u_{-}(T_{1})\geq\beta_{1}$, thus ruling out the possibility that $u_{-}(T_{1})=0$.

\paragraph{\textmd{\textit{Conclusion}}}

From (\ref{th:u-pot-well}) and (\ref{th:L2->well}) we deduce that the potential well assumption (\ref{hp:ALA}) holds true for every $t\geq T_{0}$, and hence also the differential inequality (\ref{est:S'}) is now satisfied for every $t\geq T_{0}$. Integrating this differential inequality we obtain that
$$\limsup_{t\to+\infty}S(t)\leq\frac{2}{\gamma_{1}^{2}}\limsup_{t\to+\infty}|f(t)|^{2},$$
which is  equivalent to the conclusion in (\ref{th:stable}) because of (\ref{est:S-energy}).\qed

\subsection{Proof of Proposition~\ref{prop:asymptotic}}

Since $u(t)$ and $v(t)$ are solutions to (\ref{eqn:duffing}), their difference $r(t):=u(t)-v(t)$ satisfies
\begin{equation}
r''(t)+r'(t)+A^{2}r(t)-\lambda Ar(t)=g(t),
\label{eqn:r}
\end{equation}
where
\begin{eqnarray}
g(t) & := & -|A^{1/2}u(t)|^{2}Au(t)+|A^{1/2}v(t)|^{2}Av(t) \nonumber \\
 & = & -|A^{1/2}u(t)|^{2}Ar(t)-\langle u(t)+v(t),Ar(t)\rangle Av(t).
 \label{eqn:g-r}
\end{eqnarray}

This equation requires a separate treatment in the unstable case $\sigma=0$ and in the stable cases $\sigma=\pm\sigma_{0}$. The constants $c_{8}$, \ldots, $c_{22}$ in the sequel depend only on the three parameters $\lambda$, $\lambda_{1}$, $\lambda_{2}$.

\paragraph{\textmd{\textit{Unstable case}}}

From (\ref{eqn:g-r}) we deduce that
\begin{equation}
|g(t)|\leq\left(|A^{1/2}u(t)|^{2}+|u(t)+v(t)|\cdot|Av(t)|\right)|Ar(t)|.
\label{est:g-u}
\end{equation}
On the other hand, from (\ref{hp:asymptotic}) with $\sigma=0$ and the coerciveness of $A$ we know that
$$\limsup_{t\to+\infty}|A^{1/2}u(t)|^{2}\leq c_{8}r_{0}^{2}
\quad\quad\mbox{and}\quad\quad
\limsup_{t\to+\infty}|u(t)+v(t)|\cdot|Av(t)|\leq c_{9}r_{0}^{2},$$
and therefore from  (\ref{est:g-u}) we obtain that
\begin{equation}
\limsup_{t\to+\infty}|g(t)|\leq c_{10}r_{0}^{2}\cdot\limsup_{t\to+\infty}|Ar(t)|.
\label{est:gAr}
\end{equation}

Now let us write as usual $r(t)=r_{-}(t)e_{1}+r_{+}(t)$ and $g(t)=g_{-}(t)e_{1}+g_{+}(t)$, so that equation (\ref{eqn:r}) is equivalent to the system
\begin{equation}
r_{-}''(t)+r_{-}'(t)-\lambda_{1}(\lambda-\lambda_{1})r_{-}(t)=g_{-}(t),
\label{eqn:r-}
\end{equation}
\begin{equation}
r''_{+}(t)+r'_{+}(t)+A^{2}r_{+}(t)-\lambda Ar_{+}(t)=g_{+}(t).
\label{eqn:r+}
\end{equation}

Equation (\ref{eqn:r-}) is a scalar equation that fits in the framework of Lemma~\ref{lemma:ODE-limsup} with
$$y(t):=r_{-}(t),
\quad\quad
m:=\lambda_{1}(\lambda-\lambda_{1}),
\quad\quad
\psi(t):=g_{-}(t).$$

Indeed, $r_{-}(t)$ is bounded because $u(t)$ and $v(t)$ are bounded, and for the same reason also $g_{-}(t)$ is bounded. As a consequence, from  Lemma~\ref{lemma:ODE-limsup} we deduce that
\begin{equation}
\limsup_{t\to+\infty}\left(|r_{-}'(t)|^{2}+|Ar_{-}(t)|^{2}\right)\leq c_{11}\limsup_{t\to+\infty}|g_{-}(t)|^{2}.
\label{est:r-g-}
\end{equation}

Equation (\ref{eqn:r+}) fits in the  framework of Lemma~\ref{lemma:PDE-limsup} with
$$X:=H_{+},
\quad\quad
B:=A^{2}-\lambda A,
\quad\quad
m:=\lambda_{2}(\lambda_{2}-\lambda),
\quad\quad
\psi(t):=g_{+}(t).$$

In addition, there exists a constant $c_{12}$ such that
$$|Ax|^{2}\leq c_{12}|B^{1/2}x|^{2}
\quad\quad
\forall x\in D(A)\cap H_{+}.$$

As a consequence, from Lemma~\ref{lemma:PDE-limsup} we deduce that
\begin{eqnarray}
\limsup_{t\to+\infty}\left(|r_{+}'(t)|^{2}+|Ar_{+}(t)|^{2}\right) & \leq & c_{13}\limsup_{t\to+\infty}\left(|r_{+}'(t)|^{2}+|B^{1/2}r_{+}(t)|^{2}\right) 
\nonumber \\
 & \leq & c_{14}\limsup_{t\to+\infty}|g_{+}(t)|^{2}.
 \label{est:r+g+}
\end{eqnarray}

From (\ref{est:r-g-}), (\ref{est:r+g+}), and (\ref{est:gAr}) we conclude that
\begin{eqnarray*}
\limsup_{t\to+\infty}\left(|r'(t)|^{2}+|Ar(t)|^{2}\right) & \leq & 
\limsup_{t\to+\infty}\left(|r_{-}'(t)|^{2}+|Ar_{-}(t)|^{2}\right) \\
 & & \mbox{} +\limsup_{t\to+\infty}\left(|r_{+}'(t)|^{2}+|Ar_{+}(t)|^{2}\right) \\
 & \leq & c_{15}\limsup_{t\to+\infty}|g(t)|^{2} \\
 & \leq & c_{16}r_{0}^{4}\cdot\limsup_{t\to+\infty}|Ar(t)|^{2} \\
 & \leq & c_{16}r_{0}^{4}\cdot\limsup_{t\to+\infty}\left(|r'(t)|^{2}+|Ar(t)|^{2}\right).
\end{eqnarray*}

If $r_{0}$ is small enough, the coefficient of the last $\limsup$ is less than~1. It follows that
\begin{equation}
\lim_{t\to+\infty}\left(|r'(t)|^{2}+|Ar(t)|^{2}\right)=0,
\label{th:lim-r}
\end{equation}
which in turn is equivalent to (\ref{th:main-asymptotic}).

\paragraph{\textmd{\textit{Stable case}}}

We assume, without loss of generality, that $\sigma=\sigma_{0}$ (the case $\sigma=-\sigma_{0}$ being symmetric). In order to exploit the smallness of $u(t)-\sigma_{0}e_{1}$ and $v(t)-\sigma_{0}e_{1}$, with some algebra we rewrite (\ref{eqn:g-r}) in the form
\begin{equation}
g(t)=-\sigma_{0}^{2}\lambda_{1}Ar(t)-2\sigma_{0}^{2}\lambda_{1}\langle Ar(t),e_{1}\rangle e_{1}+\hg(t),
\label{eqn:g-stable}
\end{equation}
where
\begin{eqnarray*}
\hg(t) & := & -\left(|A^{1/2}(u(t)-\sigma_{0}e_{1})|^{2}+
2\langle A(u(t)-\sigma_{0}e_{1}),\sigma_{0}e_{1}\rangle\right)Ar(t) \\
 & & -\langle u(t)+v(t),Ar(t)\rangle A(v(t)-\sigma_{0}e_{1}) \\
 & & -\langle u(t)+v(t)-2\sigma_{0}e_{1},Ar(t)\rangle\lambda_{1}\sigma_{0}e_{1}.
\end{eqnarray*}

Therefore from (\ref{hp:asymptotic}) with $\sigma=\sigma_{0}$, we deduce that
\begin{equation}
\limsup_{t\to+\infty}|\hg(t)|\leq
\left(c_{17}r_{0}+c_{18}r_{0}^{2}\right)\limsup_{t\to+\infty}|Ar(t)|.
\label{est:g123}
\end{equation}

Plugging (\ref{eqn:g-stable}) into (\ref{eqn:r}), we obtain that $r(t)$ is a solution to
$$r''(t)+r'(t)+A^{2}r(t)-\lambda Ar(t)+\sigma_{0}^{2}\lambda_{1}Ar(t)+2\sigma_{0}^{2}\lambda_{1}\langle Ar(t),e_{1}\rangle e_{1}=\hg(t).$$

Keeping (\ref{defn:sigma-0}) into account, this equation can be rewritten as
\begin{equation}
r''(t)+r'(t)+Lr(t)=\hg(t),
\label{eqn:r-stable}
\end{equation}
where $L$ is the linear operator on $H$ defined by 
$$Lx=\left\{\!
\begin{array}{l@{\quad\quad}l}
A^{2}x-\lambda_{1}Ax & \mbox{if }x\in D(A^{2})\cap H_{+},  \\
\noalign{\vspace{1ex}}
2\lambda_{1}(\lambda-\lambda_{1})e_{1} & \mbox{if }x=e_{1}.
\end{array}\right.$$

This operator is coercive, more precisely
$$\langle Lx,x\rangle\geq\min\left\{2\lambda_{1}(\lambda-\lambda_{1}),\lambda_{2}(\lambda_{2}-\lambda_{1})\strut\right\}|x|^{2}=:m_{0}|x|^{2}
\qquad
\forall x\in D(A),$$ 
and therefore (\ref{eqn:r-stable}) fits in the framework of Lemma~\ref{lemma:PDE-limsup} with
$$X:=H,
\quad\quad
B:=L,
\qquad
y(t):=r(t),
\quad\quad
m:=m_{0},
\quad\quad
\psi(t):=\hg(t).$$

In addition, there exists a constant $c_{19}$ such that
$$|Ax|^{2}\leq c_{19}|B^{1/2}x|^{2}
\quad\quad
\forall x\in D(A).$$

As a consequence, from Lemma~\ref{lemma:PDE-limsup} we deduce that
\begin{eqnarray}
\limsup_{t\to+\infty}\left(|r'(t)|^{2}+|Ar(t)|^{2}\right) & \leq & c_{20}\limsup_{t\to+\infty}\left(|r'(t)|^{2}+|B^{1/2}r(t)|^{2}\right) 
\nonumber \\
 & \leq & c_{21}\limsup_{t\to+\infty}|\hg(t)|^{2}.
\nonumber
\end{eqnarray}

Keeping (\ref{est:g123}) into account, we can continue this chain of inequalities, and obtain that
\begin{eqnarray*}
\limsup_{t\to+\infty}\left(|r'(t)|^{2}+|Ar(t)|^{2}\right) & \leq & 
c_{21}\left(c_{17}r_{0}+c_{18}r_{0}^{2}\right)^{2}\cdot\limsup_{t\to+\infty}|Ar(t)|^{2} 
\\
 & \leq & c_{22}(r_{0}^{2}+r_{0}^{4})\cdot\limsup_{t\to+\infty}\left(|r'(t)|^{2}+|Ar(t)|^{2}\right).
\end{eqnarray*}

If $r_{0}$ is small enough, we obtain again (\ref{th:lim-r}), which in turn is equivalent to (\ref{th:main-asymptotic}).\qed

\subsubsection*{\centering Acknowledgments}

The first two authors are members of the ``Gruppo Nazionale per l'Analisi Matematica, la Probabilit\`{a} e le loro Applicazioni'' (GNAMPA) of the ``Istituto Nazionale di Alta Matematica'' (INdAM). 

%\bibliographystyle{MaxNew}
%\bibliography{Duffing}

\begin{thebibliography}{10}
\providecommand{\url}[1]{\texttt{#1}}
\providecommand{\urlprefix}{URL }
\providecommand{\selectlanguage}[1]{\relax}
\providecommand{\eprint}[2][]{\url{#2}}

\bibitem{Aloui-haraux}
\textsc{F.~Aloui}, \textsc{A.~Haraux}.
\newblock Sharp ultimate bounds of solutions to a class of second order linear
  evolution equations with bounded forcing term.
\newblock \emph{J. Funct. Anal.} \textbf{265} (2013), no.~10, 2204--2225.

\bibitem{Bochner}
\textsc{S.~Bochner}.
\newblock A new approach to almost periodicity.
\newblock \emph{Proc. Nat. Acad. Sci. U.S.A.} \textbf{48} (1962), 2039--2043.

\bibitem{C-H}
\textsc{T.~Cazenave}, \textsc{A.~Haraux}.
\newblock \emph{An introduction to semilinear evolution equations},
  \emph{Oxford Lecture Series in Mathematics and its Applications}, volume~13.
\newblock The Clarendon Press, Oxford University Press, New York, 1998.
\newblock Translated from the 1990 French original by Yvan Martel and revised
  by the authors.

\bibitem{F-G}
\textsc{C.~Fitouri}, \textsc{S.~Gasmi}.
\newblock On a general equation of {D}uffing's type with double well potential.
\newblock \emph{J. Abstr. Differ. Equ. Appl.} \textbf{3} (2012), no.~2, 25--44.

\bibitem{F-H}
\textsc{C.~Fitouri}, \textsc{A.~Haraux}.
\newblock Boundedness and stability for the damped and forced single well
  {D}uffing equation.
\newblock \emph{Discrete Contin. Dyn. Syst.} \textbf{33} (2013), no.~1,
  211--223.

\bibitem{G-H}
\textsc{S.~Gasmi}, \textsc{A.~Haraux}.
\newblock N-cyclic functions and multiple subharmonic solutions of {D}uffing's
  equation.
\newblock \emph{J. Math. Pures Appl. (9)} \textbf{97} (2012), no.~5, 411--423.

\bibitem{H-LN}
\textsc{A.~Haraux}.
\newblock \emph{Nonlinear evolution equations---global behavior of solutions},
  \emph{Lecture Notes in Mathematics}, volume 841.
\newblock Springer-Verlag, Berlin-New York, 1981.

\bibitem{haraux:duffing}
\textsc{A.~Haraux}.
\newblock On the double well {D}uffing equation with a small bounded forcing
  term.
\newblock \emph{Rend. Accad. Naz. Sci. XL Mem. Mat. Appl. (5)} \textbf{29}
  (2005), 207--230.

\bibitem{holm-mars}
\textsc{P.~Holmes}, \textsc{J.~Marsden}.
\newblock A partial differential equation with infinitely many periodic orbits:
  chaotic oscillations of a forced beam.
\newblock \emph{Arch. Rational Mech. Anal.} \textbf{76} (1981), no.~2,
  135--165.

\bibitem{loud1}
\textsc{W.~S. Loud}.
\newblock On periodic solutions of {D}uffing's equation with damping.
\newblock \emph{J. Math. and Phys.} \textbf{34} (1955), 173--178.

\bibitem{loud2}
\textsc{W.~S. Loud}.
\newblock Boundedness and convergence of solutions of {$x''+cx'+g(x)=e(t)$}.
\newblock \emph{Duke Math. J.} \textbf{24} (1957), 63--72.

\bibitem{loud3}
\textsc{W.~S. Loud}.
\newblock Periodic solutions of {$x''+cx'+g(x)=\varepsilon f(t)$}.
\newblock \emph{Mem. Amer. Math. Soc. no.} \textbf{31} (1959), 58 pp. (1959).

\bibitem{moon-holm}
\textsc{F.~Moon}, \textsc{P.~Holmes}.
\newblock A magnetoelastic strange attractor.
\newblock \emph{Journal of Sound and Vibration} \textbf{65} (1979), no.~2, 275
  -- 296.

\bibitem{N-S}
\textsc{F.~Nakajima}, \textsc{G.~Seifert}.
\newblock The number of periodic solutions of {$2$}-dimensional periodic
  systems.
\newblock \emph{J. Differential Equations} \textbf{49} (1983), no.~3, 430--440.

\end{thebibliography}

\label{NumeroPagine}

\end{document}